\theoremstyle{plain} 
\newtheorem{thm}{Theorem}[section]
\newtheorem{lem}[thm]{Lemma}
\newtheorem{prop}[thm]{Proposition}
\newtheorem{cor}[thm]{Corollary}
\newtheorem{dfn}[thm]{Definition}
\newtheorem{conj}[thm]{Conjecture}
\theoremstyle{definition}
\newtheorem*{rmk}{Remark}
\numberwithin{equation}{section}
\newcommand{\Div}{\textup{div}}
\newcommand{\tensor}{\otimes}
\DeclareMathOperator{\Pic}{Pic}
 \DeclareMathOperator{\Tor}{Tor}
 \DeclareMathOperator{\Ext}{Ext}
 \DeclareMathOperator{\Hom}{Hom}
 \DeclareMathOperator{\Supp}{Supp}
 \DeclareMathOperator{\Spec}{Spec}
 \DeclareMathOperator{\Se}{S}
 \DeclareMathOperator{\CH}{CH}
 \DeclareMathOperator{\pd}{pd}
 \DeclareMathOperator{\height}{height}
 \DeclareMathOperator{\depth}{depth}
\DeclareMathOperator{\Cl}{Cl}
 \DeclareMathOperator{\LC}{H}
\DeclareMathOperator{\modu}{mod}
  \DeclareMathOperator{\MCM}{MCM} 
 \newcommand*{\SHom}{\mathcal{H}\mathit{om}}
\newcommand{\ses}[3]{0 \to {#1} \to {#2} \to {#3} \to 0}
 \newcommand{\h}[3]{\theta^{#1}({#2},{#3})}
\begin{document}

\bibliographystyle{plain}

\title[Picard groups of punctured spectra ]{Picard groups of punctured spectra of dimension three local hypersurfaces are torsion-free}

\author{Hailong Dao}
\address{Department of Mathematics\\
University of Kansas\\
 Lawrence, KS 66045-7523 USA}
\email{hdao@math.ku.edu}

\thanks{The author is partially supported by NSF grant DMS 0834050}

\subjclass [2010]{Primary: 14C22, Secondary: 13D07}

\keywords{Picard groups, hypersurfaces, local rings, non-commutative crepant resolutions}

\maketitle

\begin{abstract}
Let $(R,m)$ be a Noetherian local ring and $U_R=\Spec(R) -\{m\}$ be the punctured spectrum of $R$. Gabber conjectured that if 
$R$ is a complete intersection of dimension $3$, then the abelian group $\Pic(U_R)$ is torsion-free. In this note we prove Gabber's statement for the hypersurface case. We also point out certain connections between Gabber's Conjecture, Van den Bergh's notion of non-commutative crepant resolutions and some well-studied questions in homological algebra over local rings. 
\end{abstract}

\section{Introduction}
Let $(R,m)$ be a  local ring (always Noetherian in this note). Let $U_R=\Spec(R) -\{m\}$ be the punctured spectrum of $R$. 
In \cite{Gab} Gabber made the following:
\begin{conj}\label{GabberConj}
Let $R$ be a local complete intersection of dimension $3$. Then $\Pic(U_R)$ is torsion-free. 
\end{conj}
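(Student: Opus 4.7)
Since Conjecture \ref{GabberConj} is Gabber's open conjecture rather than a theorem settled in full generality by this paper, I will sketch a natural inductive strategy whose base case is the hypersurface result of the present note.

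After passing to the $m$-adic completion (using that torsion in $\Pic(U_R)$ is controlled by torsion in $\Pic(U_{\hat R})$ via faithfully flat descent), Cohen's structure theorem allows us to write $R = S/(f_1,\ldots,f_c)$ where $(S,n)$ is a regular local ring of dimension $3+c$ and $f_1,\ldots,f_c$ is a regular sequence in $n$. A torsion class $[L] \in \Pic(U_R)$ of order $k$ corresponds to a reflexive rank one $R$-module $M$ with $(M^{\otimes k})^{\vee\vee} \cong R$.

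The plan is to induct on the codimension $c$. The base case $c=1$ is the hypersurface result of this paper. For the inductive step with $c \geq 2$, put $R' = S/(f_1,\ldots,f_{c-1})$, a complete intersection of dimension four, and write $R = R'/(f_c)$ with $f_c$ a nonzerodivisor in $R'$. The idea is to lift $M$ to a reflexive rank one $R'$-module $M'$ on $U_{R'}$, or equivalently to lift $[L]$ along the restriction map $\Pic(U_{R'}) \to \Pic(U_R)$. Once such a lift exists, Grothendieck's Lefschetz theorem for Picard groups of punctured spectra of complete local complete intersections of dimension at least four (SGA 2, Expos\'e XI) gives $\Pic(U_{R'}) = 0$, forcing $M'$ and hence $M$ to be trivial.

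The main obstacle is the lifting step. The obstruction to deforming $M$ from $U_R$ to $U_{R'}$ is a class in $\Ext^2_{R'}(M, f_c M)$, and for general codimension $c \geq 2$ complete intersections there is no a priori reason for this class to vanish --- this is essentially the reason Gabber's conjecture is open past the hypersurface case. An alternative route, consistent with the abstract's mention of non-commutative crepant resolutions, is to associate to $[L]$ the module $T = \bigoplus_{i=0}^{k-1} (M^{\otimes i})^{\vee\vee}$ and study $\Lambda := \Hom_R(T, T)$; a nontrivial torsion $[L]$ would equip $\Lambda$ with the structure of an NCCR of $R$ in the sense of Van den Bergh, and rigidity/depth obstructions for maximal Cohen--Macaulay modules over complete intersections --- controlled by the Hochster theta pairing and related homological invariants --- would ideally forbid such an NCCR. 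I expect the genuine technical difficulty to lie either in controlling the $\Ext^2$-obstruction via the matrix factorization or higher Eisenbud-operator structure available in codimension $c$, or in establishing the incompatibility of nontrivial torsion in $\Pic(U_R)$ with the existence of an NCCR on a three-dimensional local complete intersection; settling either point in full generality appears to be beyond the methods currently available, and is precisely the bridge between the hypersurface theorem proved here and the general conjecture.
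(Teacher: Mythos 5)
This statement is labelled a conjecture for a reason: the paper does not prove it, and neither do you. The paper establishes only the hypersurface case $c=1$ (Corollary~\ref{Gabber}, deduced from Theorem~\ref{main} by applying Horrocks' correspondence to a torsion class $\mathcal E\in\Pic(U_R)$ and its module of sections $I=\Gamma_X(\mathcal E)$, for which $\Hom_R(I,I)\cong R$ is automatically maximal Cohen--Macaulay and $c_1([I])$ is torsion). Your proposal correctly recognizes the situation and is honest about being a strategy sketch, but as a proof of the stated conjecture it has two genuine, admitted gaps, and it is worth being precise about why each one is the actual open problem rather than a technicality. For the inductive route: the citation of SGA~2 is fine (complete intersections of dimension $\geq 4$ are parafactorial, so $\Pic(U_{R'})=0$), but the surjectivity of $\Pic(U_{R'})\to\Pic(U_R)$ is exactly what fails to be known; there is no exact sequence placing the obstruction in a group you can show vanishes, and deformation-theoretic obstructions to lifting a line bundle across a hypersurface section of the punctured spectrum live in cohomology groups that do not vanish for a general $3$-dimensional complete intersection. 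So the induction does not get off the ground past the base case.

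For your second route, note that it is essentially the paper's own program in Section~\ref{open}: the author shows that Tor-rigidity for suitable pairs over complete intersections (Conjecture~\ref{ci_rigid}) would yield Conjecture~\ref{genGabber} and hence the general case, by the same mechanism used in the hypersurface proof --- Proposition~\ref{theta_vanish} (vanishing of Hochster's $\theta$ via a Bourbaki sequence and Serre's intersection multiplicity) feeding into Proposition~\ref{mainProp} (the Jothilingam--Jorgensen exact sequence plus the Huneke--Wiegand/Miller theorem that $\Tor_i^R(M_1,N)=0$ for all $i>0$ forces finite projective dimension). Every step of that chain uses the period-two periodicity of resolutions over a hypersurface; over a codimension $c\geq 2$ complete intersection both the definition of $\theta$ and the rigidity input break down, and the needed replacement (Tor-rigidity for pairs with $[N]=0$ in $\overline G(R)_{\mathbb Q}$) is open even when both modules have finite length. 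In short: your write-up is a reasonable survey of why the conjecture is hard, but it contains no proof, and the paper contains none either --- only the hypersurface theorem and the conjectural bridge you partially rediscovered.
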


The above Conjecture  is equivalent to the statement that the local flat cohomology group $H_{\{m\}}^2(\Spec(R), \mu_n)=0$
when $R$ is a local complete intersection of dimension $3$, and they are both implied by (for more details, see \cite{Gab}):

\begin{conj}\label{Brauer}
Let $R$ be a strictly henselian local complete intersection of dimension at least $4$. Then the cohomological Brauer group of $U_R$ vanishes: $Br(U_R)=0$.
\end{conj}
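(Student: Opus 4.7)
The plan is to use the \'etale cohomological interpretation of the Brauer group and reduce, via comparison with a regular ambient ring, to a known purity statement. Since $R$ is strictly henselian local, the standard vanishing $H^i_{\text{\'et}}(\Spec(R), \mathbb{G}_m) = 0$ for $i \geq 1$ gives, from the excision long exact sequence for the pair $(\Spec(R), U_R)$, an identification of $H^2(U_R, \mathbb{G}_m)$ with $H^3_{\{m\}}(\Spec(R), \mathbb{G}_m)$. The task therefore reduces to showing that this local \'etale cohomology group of $\mathbb{G}_m$ vanishes.

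First I would write $R = S/(f_1,\ldots,f_c)$ where $S$ is a strictly henselian regular local ring of dimension $\dim R + c$ and $(f_1,\ldots,f_c)$ is a regular sequence, and attempt an induction on the codimension $c$. The base case $c=0$ is the regular case: for $S$ strictly henselian regular of dimension $\geq 4$, Gabber's absolute cohomological purity (applied after Kummer-twisting to finite torsion coefficients and then passing to the inverse limit) yields $H^3_{\{m_S\}}(\Spec(S),\mathbb{G}_m)=0$, recovering the classical vanishing $Br(U_S)=0$.

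Second, at each inductive step from $R_{j-1} = S/(f_1,\ldots,f_{j-1})$ to $R_j = R_{j-1}/(f_j)$, one would exploit that $f_j$ is a nonzerodivisor on $R_{j-1}$ and invoke the Gysin-type long exact sequence for the closed immersion $\Spec(R_j) \hookrightarrow \Spec(R_{j-1})$, combined with the Kummer sequence for $\mu_n$ on the open complement $\{f_j \neq 0\}$, to transport the vanishing of $H^3_{\{m\}}(-,\mathbb{G}_m)$ from one stage to the next. Provided $\dim R_j \geq 4$ at every intermediate step, the purity threshold for the ambient regular piece keeps $H^3$ safely in the vanishing range, and the connecting maps ought to vanish for dimensional reasons.

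The main obstacle will be that Gabber's purity for $\mathbb{G}_m$ is sharp only on regular schemes: in the singular complete-intersection setting the local cohomology sheaves $\mathcal{H}^i_{\{m\}}(\mathbb{G}_m)$ detect the singularity itself and are not automatically killed by a naive Gysin argument. A second, equally serious difficulty is the $p$-primary part, where $p$ is the residue characteristic: the Kummer sequence fails for $n=p$, and one must pass to flat cohomology of $\mu_{p^n}$ or to logarithmic de Rham--Witt sheaves, where purity is much more delicate and only partially known for singular schemes. In practice one would split the argument into the prime-to-$p$ and $p$-primary parts, the latter requiring a refinement of the purity work of Gabber and Kato adapted to complete intersections; this is presumably the step where current methods stall and the reason the conjecture remains open in general.
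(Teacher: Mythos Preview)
The statement you are trying to prove is labeled \emph{Conjecture} in the paper and is not proved there. The paper merely records it as a conjecture of Gabber, notes that it implies Conjecture~\ref{GabberConj} on torsion-freeness of $\Pic(U_R)$, and then proceeds to prove only the \emph{hypersurface case of Conjecture~\ref{GabberConj}} (in dimension $3$) by commutative-algebra methods involving Hochster's theta function. There is no argument in the paper for the Brauer-group vanishing itself, so there is nothing to compare your proposal against.

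As for the proposal on its own terms: you have correctly identified the standard reduction $Br(U_R)\cong H^3_{\{m\}}(\Spec R,\mathbb{G}_m)$ via strict henselianity, and the idea of inducting on codimension through a regular ambient ring is natural. But, as you yourself concede in the final paragraph, the inductive step does not go through: absolute cohomological purity for $\mathbb{G}_m$ (or for $\mu_n$) is a theorem for \emph{regular} schemes, and the Gysin/purity machinery breaks down once the intermediate rings $R_j$ are singular complete intersections. The $p$-primary part in mixed characteristic is an additional genuine obstruction. So what you have written is an outline of why the conjecture is plausible and where it gets hard, not a proof; the honest acknowledgment at the end that ``current methods stall'' is accurate, and that is exactly why the paper treats this as an open conjecture rather than a theorem.
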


Conjecture \ref{GabberConj} is known when $R$ contains a field; the characteristic $0$ case follows from Grothendieck's techniques on local Leftschetz  theorems (cf. \cite{Ba, Rob}), and the positive characteristic case can be found in \cite{DLM} (it is probably known to experts, though we can not find an exact reference. It was claimed in \cite{Gab} that Conjecture \ref{Brauer} is known in positive characteristic). We also note that when  $U_R$ is replaced by a smooth projective complete intersection the analogous result on the Picard group is contained in \cite[Theorem 1.8]{De}. In any case, the main difficulty is when $R$ is of mixed characteristic.

In this paper we give a short and relatively self-contained proof of Gabber's Conjecture \ref{GabberConj} for the case of hypersurfaces, that is, if $\hat{R} \cong T/(f)$ where $T$ is a complete regular local ring. In fact, in this situation we shall prove a stronger result which is a pure commutative algebra statement. To state such result let us recall a useful notion. For a Noetherian ring $R$ one can define a map $c_1: G(R) \to \CH^1(R)$ from the Grothendieck group of finitely generated modules over $R$ to the height one component of the  Chow group of $\Spec(R)$ (see Subsection \ref{ChowPicard} for more details). Given an $R$-module $M$, we shall abuse notation a  bit and call $c_1([M])$ the first local Chern class of $M$.  Then our main result says:

\begin{thm}\label{mainTheorem}
Let $R$ be local hypersurface of dimension $3$. Let $N$ be a finitely generated reflexive $R$-module which is  locally free   on $U_R$. Furthermore, assume that the first local Chern class of $N$ is  torsion in $\CH^1(R)$. Then $\Hom_R(N,N)$ is a maximal Cohen-Macaulay $R$-module if and only if $N$ is free.  
\end{thm}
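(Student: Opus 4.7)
The ``only if'' direction is immediate, so I plan the forward direction. First, standard descent lets me assume $R = T/(f)$ with $(T,\mathfrak{n})$ a complete regular local ring of dimension $4$. Since $N$ is reflexive on the Gorenstein $3$-dimensional ring $R$, $\depth N \geq 2$, so a surjection $R^a \twoheadrightarrow N$ has maximal Cohen--Macaulay kernel $K$ by the depth lemma (as $\depth K \geq \min(3, \depth N + 1) = 3$). Applying $\Hom_R(-,N)$ yields
\[
0 \to \Hom(N,N) \to N^a \to \Hom(K,N) \to \Ext^1_R(N,N) \to 0,
\]
with $\Ext^1_R(N,N)$ of finite length since $N$ is locally free on $U_R$. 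The hypothesis gives $\depth \Hom(N,N) = 3$, while $N^a$ has depth $\geq 2$ and $\Hom(K,N)$ has depth $\geq 2$ (using $\depth N \geq 2$ in each case). Splitting the four-term sequence at its image into two short exact sequences and chasing depths, the first yields that the image has depth $\geq 2$, and the second then forces $\depth \Ext^1_R(N,N) \geq 1$; combined with the finite-length property, this gives $\Ext^1_R(N,N) = 0$.

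For the second stage, Eisenbud's matrix factorization theorem tells us that $\Omega_R N$ is MCM and $\Ext^i_R(N,N)$ is $2$-periodic for $i \geq 2$. The Hochster $\theta$-invariant $\theta^R(N,N)$ is defined because $N$ is locally free on $U_R$; by prior results of the author identifying $\theta^R$ with an intersection pairing on $\CH^1(R)$ for $3$-dimensional hypersurfaces, the torsion of $c_1(N)$ forces $\theta^R(N,N) = 0$. Combining $\theta^R(N,N) = 0$ with the $\Ext^1$ vanishing from the first stage and $2$-periodicity, supplemented by a parallel depth-chase applied to the MCM module $\Omega_R N$ to bridge the non-periodic initial portion of the resolution, should give $\Ext^i_R(N,N) = 0$ for all $i \geq 1$. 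The third stage then deduces freeness of $N$ from this strong self-Ext-orthogonality together with the MCM property of $\Lambda := \Hom_R(N,N)$: the datum $(R,\Lambda,N)$ resembles a non-commutative crepant resolution whose Chern-class obstruction is torsion, and a rigidity argument specific to hypersurfaces collapses this to the trivial (Morita-equivalent-to-$R$) case, yielding $N \cong R^r$.

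The chief technical obstacle will be the second stage, where the precise form of the $\theta$/Chern-class correspondence must be deployed for a reflexive module $N$ that need not be MCM, and the vanishing of $\Ext^1$ must be propagated through the non-periodic initial portion of the resolution of $N$ (since the $2$-periodicity of $\Ext^i_R(N,N)$ begins at $i = 2$ rather than $i = 1$ when $N$ is only reflexive). The concluding freeness deduction also requires structural input specific to hypersurfaces, likely in the form of Auslander--Reiten-type rigidity or an argument via the Morita triviality of the would-be NCCR.
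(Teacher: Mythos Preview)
Your first stage is correct and is essentially Lemma~\ref{useful} unrolled as an explicit depth chase. The real gap is stage two. The invariant $\theta^R(N,N)=0$ says only that the two periodic $\Tor$-lengths agree, not that either vanishes, and you provide no bridge from $\Ext^1_R(N,N)=0$ to the vanishing of any $\Tor$ or higher $\Ext$ in the periodic range. Your proposed depth chase on $\Omega_R N$ fails: from $0\to\Hom_R(N,N)\to N^a\to\Hom_R(\Omega_R N,N)\to 0$ (using $\Ext^1_R(N,N)=0$) one obtains only $\depth\Hom_R(\Omega_R N,N)\geq\min(2,3-1)=2$, not $3$, so the argument cannot iterate to kill $\Ext^2_R(N,N)$. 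The paper supplies three specific ingredients absent from your outline: (i) the MCM approximation $0\to W\to M\to N\to 0$ with $W$ free, transferring $\Ext^1_R(N,N)=0$ to $\Ext^1_R(M,N)=0$ for an MCM module $M$; (ii) the coherent-functor four-term sequence (Hartshorne/Jothilingam/Jorgensen), which for $M$ MCM collapses to a surjection $\Ext^1_R(M,N)\twoheadrightarrow\Tor_1^R(M_1,N)$ with $M_1=\Omega_R(M^*)$, converting the $\Ext^1$-vanishing into a $\Tor$-vanishing in the periodic range and hence, combined with $\theta^R(M^*,N)=0$, forcing $\Tor_i^R(M_1,N)=0$ for all $i>0$; (iii) the Huneke--Wiegand/Miller theorem, which then yields $\pd_RN<\infty$. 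Note that because step (ii) passes through $M^*$, the relevant vanishing is $\theta^R(M^*,N)=\theta^R(N^*,N)=0$, not $\theta^R(N,N)=0$.

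Two further issues. The vanishing of $\theta^R$ under the torsion-$c_1$ hypothesis is not a prior black box in mixed characteristic: it is Proposition~\ref{theta_vanish} of this paper, proved via a refined Bourbaki sequence (Theorem~\ref{moving}) together with Serre's vanishing for intersection multiplicities over the ambient regular local ring; the pairing interpretations of $\theta$ you allude to require equicharacteristic, unramified, or graded hypotheses and do not suffice here. And your stage three is not a proof as written: the paper's endgame is concrete---once $\pd_RN<\infty$, reflexivity gives $\pd_RN\leq 1$, and then $\Ext^1_R(N,N)=0$ with Nakayama forces $N$ free---whereas your appeal to an unspecified NCCR-collapse or Auslander--Reiten-type rigidity leaves the conclusion unestablished.
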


It is not hard to see that the above Theorem implies Conjecture \ref{GabberConj} in the hypersurfaces case, by taking $N$ to be the $R$-module generated by the sections of a torsion element in $\Pic(U_R)$; see section \ref{notations} and the proof of \ref{Gabber} for more details. 

This project actually arises from our  attempt to understand a striking definition by Van den Bergh of non-commutative crepant resolutions of a Gorenstein local ring $R$. To explain the connection we recall: 

\begin{dfn}(Van den Bergh, \cite{V1})
Suppose that there exists a reflexive module $N$ satisfying:
\begin{enumerate}
\item $A = \Hom_R(N,N)$ is a maximal Cohen-Macaulay $R$-module.
\item $A$ has finite global dimension equal to $d=\dim R$.
\end{enumerate} 
Then $A$ is called a non-commutative crepant resolution (henceforth NCCR) of $R$.
\end{dfn}

In \cite{Da3} we proved that non-commutative crepant resolutions can not exist when $R$ is a dimension $3$, equicharacteristic  or unramified  hypersurface with isolated singularity and torsion class group. Theorem \ref{mainTheorem} implies: 

\begin{cor}
Let $R$ be a dimension $3$ hypersurface which has isolated singularity and torsion class group (which in this case is equivalent to $R$ being an unique factorization domain by our main results). Then $R$ has no non-commutative crepant resolution in the sense of Van den Bergh.  
\end{cor}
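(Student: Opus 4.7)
The plan is to assume that $R$ admits a non-commutative crepant resolution $A=\Hom_R(N,N)$, verify the hypotheses of Theorem~\ref{mainTheorem} for $N$, conclude $N$ is free, and then extract a contradiction with the singularity of $R$.

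To apply Theorem~\ref{mainTheorem} I need to check two things about $N$. For local freeness on $U_R$: given $p\in U_R$, the isolated singularity hypothesis forces $R_p$ to be regular of dimension at most $2$; reflexivity localizes, so $N_p$ is reflexive over $R_p$, hence a second syzygy of depth at least $\min(2,\dim R_p)=\dim R_p$, hence Cohen--Macaulay, hence free over the regular ring $R_p$ by Auslander--Buchsbaum. For the torsion first Chern class: $R$ is a normal local domain, so $\CH^1(R)\cong\Cl(R)$, and $c_1([N])$ corresponds to the class of the reflexive determinant $\det(N)\in\Cl(R)$; the torsion hypothesis on the class group supplies the remaining input.

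Theorem~\ref{mainTheorem}, combined with the NCCR hypothesis that $A$ is MCM, now yields $N\cong R^n$. Hence $A\cong M_n(R)$, so by Morita invariance $\text{gl.dim}(R)=\text{gl.dim}(A)=3=\dim R$, meaning $R$ is regular---contradicting that $R$ is a genuine (singular) hypersurface. The argument is essentially a book-keeping application of the main theorem and I expect no serious obstacle. The only auxiliary point worth flagging is the parenthetical equivalence in the corollary's statement: torsion class group implies $R$ is a UFD, because Theorem~\ref{mainTheorem} gives Gabber's Conjecture for $R$ and, $R$ being regular on $U_R$ (hence locally factorial there), $\Pic(U_R)=\Cl(R)$; a group that is both torsion and torsion-free must vanish.
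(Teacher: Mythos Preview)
Your argument is correct and is precisely the route the paper intends: the corollary is stated immediately after Theorem~\ref{mainTheorem} with the phrase ``Theorem~\ref{mainTheorem} implies,'' and no separate proof is given. Your verification of the hypotheses (local freeness on $U_R$ via isolated singularity plus reflexivity over a regular local ring of dimension $\leq 2$; torsion $c_1$ via $\CH^1(R)\cong\Cl(R)$), followed by the Morita-invariance contradiction with regularity, is exactly what the paper has in mind. One small simplification: you do not need to invoke the reflexive determinant to see that $c_1([N])$ is torsion---since the entire group $\CH^1(R)\cong\Cl(R)$ is torsion by hypothesis, every class in it is automatically torsion.
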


We now briefly describe the organization of the paper. Section \ref{notations} deals with preliminary materials. In Section \ref{mainTheorems} we give the proofs of the main results announced above as well as some other interesting applications. Finally, in Section \ref{open} we raise some open questions relevant to our approach to Gabber's conjecture. 
 
{\bf Acknowledgements.} We sincerely thank two anonymous referees whose detailed comments corrected several statements and improved the paper significantly.
 
\section{Notations and preliminary results}\label{notations}
Throughout the note $R$ will be a Noetherian local ring. Recall that a maximal Cohen-Macaulay $R$-module $M$ is a finitely generated module satisfying $\depth M = \dim R$.  

 Let $\modu(R)$ and $\MCM(R)$ be the category of finitely generated  and finitely generated  maximal Cohen-Macaulay $R$-modules, respectively. Suppose  $X$ is a Noetherian scheme. Let $\mathfrak{Coh} (X)$ denote the category of coherent sheaves on $X$ and $\mathfrak{Vect}(X)$ the subcategory of vector bundles on $X$. By $G(X), \Pic(X), \CH^i(X), \Cl(X)$ we shall denote the Grothendieck group of coherent sheaves on $X$, the Picard group of invertible sheaves on $X$, the Chow group of codimension $i$ irreducible, closed subschemes of $X$, and the class group of $X$, respectively. When $X=\Spec R$ we shall write $G(R), \Pic(R), \CH^i(R), \Cl(R)$.  Let $\overline G(R):=G(R)/\mathbb Z[R]$ be the reduced Grothendieck group and $\overline G(R)_{\mathbb Q}:= \overline G(R)\tensor_{\mathbb Z}\mathbb Q$ be the reduced Grothendieck group of $R$ with rational coefficients. 

\subsection{Vector bundles on $U_R$ and modules over $R$}\label{Horrocks}
Let $\Gamma_X$ be the section functor on $X$. We have the following:

\begin{prop}(Horrocks \cite[section 1]{Hor})
Let $R$ be a Noetherian local ring such that $\depth R\geq 2$. Let $X= U_R$. Then $\Gamma_X$ induces an equivalence of category between  $\mathfrak{Vect} (X)$ and the subcategory of $\modu(R)$  consisting of finitely generated modules $M$ which is locally free on non-maximal primes with $\depth M\geq 2$ (Note that the condition $\depth R\geq 2$ also ensures that $X$ is connected).  
\end{prop}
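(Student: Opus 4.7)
The plan is to construct an explicit quasi-inverse $\Phi$ by $M \mapsto \widetilde{M}|_X$ and verify both compositions are naturally isomorphic to the identity via the local cohomology four-term exact sequence. For any finitely generated $R$-module $M$, the sequence
\[
0 \to H^0_m(M) \to M \to \Gamma_X\bigl(\widetilde{M}|_X\bigr) \to H^1_m(M) \to 0
\]
shows that $M \to \Gamma_X(\widetilde{M}|_X)$ is an isomorphism if and only if $H^0_m(M) = H^1_m(M) = 0$, i.e., $\depth M \geq 2$. This immediately gives $\Gamma_X \circ \Phi \simeq \text{id}$ on the stated subcategory of $\modu(R)$ and also forces any inverse of $\Gamma_X$ to take values in modules of depth $\geq 2$.

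The substantive step is to show that for any vector bundle $\mathcal{F}$ on $X$, the $R$-module $\Gamma_X(\mathcal{F})$ is finitely generated; once this is in hand, the properties ``locally free on $U_R$'' and ``depth $\geq 2$'' fall out of the same four-term sequence. The approach is to first extend $\mathcal{F}$ to a coherent sheaf $\widetilde{N}$ on $\Spec R$ via Serre's extension theorem, then pass to the reflexive hull $N^{\vee\vee} := \Hom_R(\Hom_R(N,R), R)$, which is finitely generated since $R$ is Noetherian. Because $\mathcal{F}$ is already reflexive (being locally free) and $(-)^{\vee\vee}$ commutes with localization at primes of $U_R$, one checks $\widetilde{N^{\vee\vee}}|_X \cong \mathcal{F}^{\vee\vee} \cong \mathcal{F}$.

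The crux is then the depth bound $\depth N^{\vee\vee} \geq 2$. For any finitely generated $L$ with $L^\vee \neq 0$, a free presentation $R^m \to R^n \to L \to 0$ dualizes to $0 \to L^\vee \to R^n \to R^m$; the image $C$ of the right map is a submodule of $R^m$, so it inherits depth $\geq 1$, and the depth lemma applied to $0 \to L^\vee \to R^n \to C \to 0$ yields $\depth L^\vee \geq \min(\depth R, \depth C + 1) \geq 2$. Applying this first to $L = N$ and then to $L = N^\vee$ gives $\depth N^{\vee\vee} \geq 2$. Plugging $N^{\vee\vee}$ back into the four-term sequence then forces $N^{\vee\vee} \cong \Gamma_X(\mathcal{F})$, completing the finite generation argument. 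The main obstacle I anticipate is precisely this depth chase; it is the only place where the hypothesis $\depth R \geq 2$ is invoked in an essential way, and without it the reflexive-hull candidate would not automatically sit on the correct side of the four-term sequence.
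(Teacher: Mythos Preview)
The paper does not supply a proof of this proposition; it is quoted directly from Horrocks and used as a black box. So there is no ``paper's own proof'' to compare against.

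That said, your argument is correct and is essentially the standard one. The four-term local cohomology sequence handles the direction $\Gamma_X \circ \Phi \simeq \mathrm{id}$ cleanly, and your finite-generation step via a coherent extension followed by the reflexive hull is exactly the right move. The depth bound $\depth L^\vee \geq 2$ is the key computation, and your derivation via the depth lemma is sound: the only point worth making explicit is that if $C=0$ then $L^\vee \cong R^n$ and the bound is immediate, while if $C\neq 0$ then any $R$-regular element is regular on the submodule $C\subset R^m$, giving $\depth C \geq 1$. One small addition you could make for completeness: the natural isomorphism $\Phi\circ\Gamma_X \simeq \mathrm{id}$ on $\mathfrak{Vect}(X)$ is just the counit $j^*j_*\mathcal{F}\to\mathcal{F}$ for the open immersion $j\colon X\hookrightarrow \Spec R$, which is always an isomorphism; your reflexive-hull argument is then really proving that $j_*\mathcal{F}$ is coherent (equivalently, that $\Gamma_X(\mathcal{F})$ is finitely generated), which is where the hypotheses $\depth R\geq 2$ and $\mathcal{F}$ locally free are genuinely used.
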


In particular, let $\mathcal E$ represent an element in $\Pic(X)$ and let $I = \Gamma_X(\mathcal E)$. We know that $I$ is a reflexive ideal in $R$ which is locally free of rank $1$ on $X$. Furthermore 
$\Hom_R(I,I) \cong \Gamma_X(\SHom_{\mathcal O_X}(\mathcal E,\mathcal E)) \cong \Gamma_X(\mathcal O_X)= R$. 

\subsection{Some maps between Chow, Picard, and Grothendieck groups} \label{ChowPicard}
In this Subsection we assume that $R$ is a local ring such that $\depth R\geq 2$.
For $i=0,1$ there are maps $c_i: G(R) \to \CH^i(R)$.  These maps admit a very elementary definition as follows: suppose $M$ is an $R$-module. Pick any prime filtration $\mathcal F$ of $M$. Then one can take $c_i([M])  = \sum [R/p]$, where $p$ runs over all prime ideals such  that  $R/p$ appears in $\mathcal F$ and $\height(p)=i$, note that a prime can occur multiple times in the sum (for a proof that this is well-defined see the main Theorem of \cite{Ch}). When $R$ is a normal, algebra essentially of finite type over a field and $N$ is locally free (i.e. a vector bundle) on $U_R$, $c_1$ agrees with the first  Chern class of $N$, as defined in \cite[Chapter 3]{Fu}, but we shall not need that fact.  

One has the following diagram of maps of abelian groups:
\[
\xymatrix{                  &\Pic(U_R)  \ar[d]^p \\
 G(R) \ar[r]^{c_1}   & \CH^1(R)}
\]

Here $p$ is induced by the well-known map between Cartier and Weil divisors (see Chapter 2 of \cite{Fu}). 

Note that we do not indicate any map between $\Pic(U_R)$ and $G(R)$. However, the diagram ``commutes" in a 
weak sense: if $\mathcal E$ represents an element in $\Pic(X)$ and  $I = \Gamma_X(\mathcal E)$ then $p([\mathcal E]) = c_1([I])$ in $\CH^1(R)$.

Obviously, $c_1([R])=0$, so $c_1$ induces a map $q: \overline G(R) \to \CH^1(R)$. In particular, if $M$ is a module such that $[M]=0$ in $ \overline G(R)_{\mathbb Q}$ then $c_1([M])$ is torsion in $\CH^1(R)$. 

\subsection{Maximal Cohen-Macaulay approximations}\label{appro}
The reference for this Subsection is  the paper \cite{AB}.
Suppose that $R$ is Cohen-Macaulay and a homomorphic image of a Gorenstein ring. For any $R$-module $N$ there exists a short exact sequence: 
\begin{equation}
\ses WMN
\end{equation}
such that  $M\in \MCM(R)$ and $W$ has finite injective dimension. Note that if $R$ is Gorenstein, then $\pd_RW<\infty$. Also, if $R$ is Gorenstein and  $\depth N\geq \dim R-1$, then by counting depth and  the Auslander-Buchsbaum formula $W$ must be free. 

\subsection{Hochster's theta function}
Let $R $ be a local hypersurface, so $\hat R=T/(f)$ where $T$ is a regular local ring. Suppose that $M$ is an $R$ module such that $\pd_{R_p}M_p<\infty$ for any $p\in U_R$. Then for any $R$-module $N$, $\ell(\Tor_i^R(M,N))<\infty$ for $i\gg 0$; here $\ell(-)$ denotes length. 
The function $\theta^R(M,N)$ was introduced by Hochster (\cite{Ho1}) to be:
$$ \theta^R(M,N) = \ell(\Tor_{2e+2}^R(M,N)) -\ell(\Tor_{2e+1}^R(M,N)) $$
where $e$ is any integer such that $2e \geq \dim R$. It is well known (see \cite{Ei}) that the sequence of modules $\{\Tor_i^R(M,N)\}$ is periodic of
period 2 for $i>\depth R- \depth M$, so this function is well-defined. The theta function satisfies the following properties:

\begin{prop}\label{decent}(Hochster, \cite{Ho1})

\begin{enumerate}
\item If $M\tensor_RN$ has finite length, then:
$$\theta^R(M,N) = \chi^T(M,N):= \sum_{i\geq 0} (-1)^i\ell(\Tor_i^T(M,N))$$
Here $\chi^T$ is the well-known  Serre's intersection multiplicity. In particular, if  $\dim M +\dim N \leq \dim R =\dim T-1$, then $\h RMN=0$ (note that vanishing for $\chi^T$ is proved for all regular local rings; see \cite[13.1]{Ro})
\item $\theta^R(M,N)$ is bi-additive on short exact sequences,
assuming it is defined on all pairs. In particular, if $M$ is locally of finite projective dimension on $U_R$, then the rule:
$ [N] \mapsto \theta^R(M,N) $ induces maps $\overline G(R) \to \mathbb Z$ and $\overline G(R)_{\mathbb Q} \to \mathbb Q$.
\end{enumerate}
\end{prop}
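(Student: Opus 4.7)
Both parts exploit the fact that $R = T/(f)$ is a hypersurface, via the Cartan--Eilenberg change-of-rings spectral sequence linking $\Tor^R$ and $\Tor^T$. For part (1), I would first note that since $fM=0$ and $f$ is a nonzerodivisor on $T$, the short resolution $0 \to T \xrightarrow{f} T \to R \to 0$ gives $\Tor_0^T(R,M)\cong \Tor_1^T(R,M)\cong M$ and $\Tor_q^T(R,M)=0$ for $q\geq 2$. The base-change spectral sequence
$$E^2_{p,q}=\Tor_p^R\bigl(\Tor_q^T(R,M),N\bigr)\Longrightarrow \Tor_{p+q}^T(M,N)$$
is therefore supported on only two rows and collapses on the $E^3$ page, producing a long exact sequence
$$\cdots \to \Tor_n^T(M,N)\to \Tor_n^R(M,N)\xrightarrow{\partial}\Tor_{n-2}^R(M,N)\to \Tor_{n-1}^T(M,N)\to \cdots.$$
Since $T$ is regular, $\Tor_i^T(M,N)=0$ for $i>\dim T$, so the long exact sequence forces $\partial$ to be an isomorphism $\Tor_i^R(M,N)\cong \Tor_{i-2}^R(M,N)$ for $i\gg 0$, re-deriving Eisenbud's 2-periodicity. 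Because $M\tensor_R N$ has finite length, every term in the long exact sequence has finite length, and a telescoping computation of alternating sums of lengths along the sequence identifies $\chi^T(M,N)=\sum_i (-1)^i\ell(\Tor_i^T(M,N))$ with the periodic difference $\ell(\Tor_{2e+2}^R(M,N))-\ell(\Tor_{2e+1}^R(M,N))=\theta^R(M,N)$ for $e\gg 0$.  The stated vanishing when $\dim M+\dim N\leq \dim R=\dim T-1$ is then immediate from Serre's vanishing theorem for regular local rings, \cite[13.1]{Ro}, which gives $\chi^T(M,N)=0$.

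For part (2), given a short exact sequence $\ses{N'}{N}{N''}$, I would apply the long exact sequence of $\Tor_\bullet^R(M,-)$. Since $M$ is locally of finite projective dimension on $U_R$, $\Tor_i^R(M,-)$ has support contained in $\{m\}$ and hence finite length for $i$ large. For such $i$, Eisenbud's periodicity (\cite{Ei}) is natural in the second variable, so the boundary map $\Tor_{2e+3}^R(M,N'')\to \Tor_{2e+2}^R(M,N')$ is identified under periodicity with $\Tor_{2e+1}^R(M,N'')\to \Tor_{2e}^R(M,N')\cong \Tor_{2e+2}^R(M,N')$. This closes the six-term window
$$\Tor_{2e+2}(M,N')\to \Tor_{2e+2}(M,N)\to \Tor_{2e+2}(M,N'')\to \Tor_{2e+1}(M,N')\to \Tor_{2e+1}(M,N)\to \Tor_{2e+1}(M,N'')$$
into a cyclic exact hexagon of finite-length modules; the vanishing of its alternating sum of lengths is exactly $\theta^R(M,N)=\theta^R(M,N')+\theta^R(M,N'')$. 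To see the assignment factors through $\overline G(R)=G(R)/\ZZ[R]$, it suffices to check $\theta^R(M,R)=0$, which is immediate since $\Tor_i^R(M,R)=0$ for $i\geq 1$. The $\QQ$-linear version is obtained by $\tensor_{\ZZ}\QQ$.

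\textbf{Main obstacle.}  The technically most delicate step is the construction of the change-of-rings long exact sequence and the sign bookkeeping needed to obtain $\theta^R=+\chi^T$ on the nose (rather than with a spurious sign). Once that identification is in hand, the remaining assertions reduce to Serre's vanishing, Eisenbud's periodicity, and routine alternating length arithmetic on finite-length modules.
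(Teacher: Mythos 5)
The paper offers no proof of this proposition: it is stated as a quotation from Hochster \cite{Ho1} and used as a black box. Your reconstruction is correct and is essentially Hochster's original argument, so there is no gap to report; let me just confirm the two points you flag as delicate. For the sign in part (1): writing $D=\dim T$, $r_n=\ell(\Tor_n^R(M,N))$ and $i_n$ for the length of the image of the degree $-2$ differential $\Tor_n^R(M,N)\to\Tor_{n-2}^R(M,N)$, the two-row spectral sequence gives $\ell(\Tor_n^T(M,N))=(r_{n-1}-i_{n+1})+(r_n-i_n)$, and the alternating sum over $0\le n\le D$ telescopes to $(-1)^D(r_D-i_{D+1})=(-1)^D(r_D-r_{D+1})$, which equals $\h RMN$ for either parity of $D$ once periodicity is invoked; so the identification is $\theta^R=+\chi^T$ on the nose. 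For part (2): closing the six-term window into an exact hexagon does require that the periodicity isomorphism commute with the connecting maps in the second variable, and this holds because the periodicity is implemented by the Eisenbud operator, a degree $-2$ chain endomorphism of the fixed resolution of $M$, hence natural in the second argument; if you prefer to avoid even that, you can truncate the long exact sequence over a window of $6k$ consecutive terms, observe that its alternating length sum differs from $k\bigl(\theta^R(M,N')-\theta^R(M,N)+\theta^R(M,N'')\bigr)$ by an error bounded independently of $k$, and let $k\to\infty$. The only omission worth noting is that the proposition asserts bi-additivity while you treat only the second variable; the first-variable case is the identical argument applied to the long exact sequence of $\Tor^R(-,N)$, using that finite projective dimension on $U_R$ passes to the outer terms of a short exact sequence whenever it holds for two of the three.
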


The following elementary but useful result will be used in the proof of our main Theorem:

\begin{lem}(Lemma 2.3, \cite{Da3})\label{useful}
Let $R$ be a Cohen-Macaulay local ring, $M,N$ finitely generated $R$-modules and $n>1$ an integer. Consider the two conditions:
\begin{enumerate}
\item  $\Hom(M,N)$  satisfies Serre's condition $(\Se_{n+1})$.
\item  $\Ext_R^i(M,N)=0$ for $1 \leq i \leq n-1$.
\end{enumerate}
If  $M$ is locally free in codimension $n$ and $N$ satisfies $(\Se_n)$, then (1) implies (2). If  $N$ satisfies $(\Se_{n+1})$, then (2) implies (1).
\end{lem}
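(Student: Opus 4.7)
The plan is to exploit a free resolution $\cdots \to F_1 \to F_0 \to M \to 0$ with syzygies $\Omega^k M$, apply $\Hom(-,N)$ to the defining short exact sequences $0 \to \Omega^{k+1}M \to F_k \to \Omega^k M \to 0$, and chase the resulting four-term exact sequences
\[
0 \to \Hom(\Omega^k M, N) \to \Hom(F_k, N) \to \Hom(\Omega^{k+1} M, N) \to \Ext^{k+1}(M, N) \to 0
\]
via the depth lemma. Two ingredients drive the argument: first, $\Hom(F_k, N)$ is a direct sum of copies of $N$, so its depth at any prime $\mathfrak{p}$ equals $\depth N_\mathfrak{p}$; second, the easy auxiliary fact that whenever $N$ satisfies $(\Se_2)$, $\Hom(X, N)$ also satisfies $(\Se_2)$ for any finitely generated $X$, which follows by realizing $\Hom(X,N)$ as a kernel of $N^{\oplus r} \to N^{\oplus s}$ coming from a presentation of $X$ and using that the absence of embedded associated primes on $N$ passes to such kernels.

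For the direction $(2) \Rightarrow (1)$, the vanishing $\Ext^i(M,N)=0$ for $1\le i\le n-1$ collapses each four-term sequence to a short exact sequence in the range $0 \le k \le n-2$. Iterated application of the depth lemma in the form $\depth A \ge \min(\depth B, \depth C+1)$ yields, at any prime $\mathfrak{p}$,
\[
\depth \Hom(M, N)_\mathfrak{p} \ge \min\bigl(\depth N_\mathfrak{p},\; \depth \Hom(\Omega^{n-1} M, N)_\mathfrak{p} + n - 1\bigr).
\]
The $(\Se_{n+1})$ hypothesis on $N$ handles the first entry, the $(\Se_2)$ auxiliary handles the second, and a short case split on $\dim R_\mathfrak{p}$ versus $n+1$ gives $\Hom(M,N) \in (\Se_{n+1})$.

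For $(1) \Rightarrow (2)$, I argue by contradiction. If $\Ext^j(M,N) \ne 0$ for some $1\le j\le n-1$, take the smallest such $j$ and pick $\mathfrak{p}$ minimal in its support; the local-freeness hypothesis forces $\height \mathfrak{p} \ge n+1$ and hence $\depth \Ext^j(M,N)_\mathfrak{p} = 0$, $\depth \Hom(M,N)_\mathfrak{p} \ge n+1$, and $\depth \Hom(F_k,N)_\mathfrak{p} \ge n$. Minimality of $j$ makes the four-term sequence short exact for $0 \le k \le j-2$, and iterating the depth lemma in the form $\depth C \ge \min(\depth B, \depth A - 1)$ delivers
\[
\depth \Hom(\Omega^k M,N)_\mathfrak{p} \ge n - k + 1 \quad \text{for } 1\le k\le j-1.
\]
At level $j$ the four-term sequence no longer collapses; splitting it into $0\to\Hom(\Omega^{j-1}M,N)\to\Hom(F_{j-1},N)\to K\to 0$ and $0\to K\to\Hom(\Omega^jM,N)\to\Ext^j(M,N)\to 0$ yields $\depth K_\mathfrak{p} \ge n-j+1 \ge 2$, and combining with the $(\Se_2)$ bound on $\Hom(\Omega^jM,N)$ forces $\depth \Ext^j(M,N)_\mathfrak{p} \ge 1$, contradicting the choice of $\mathfrak{p}$.

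I expect the main obstacle to be the tight depth bookkeeping in the second implication: the bound on $\depth \Hom(\Omega^k M,N)_\mathfrak{p}$ drops by exactly one at each syzygy step, so the contradiction hinges on the inequality $n - j + 1 \ge 2$ at the final split, which is exactly ensured by $j \le n-1$. The seemingly innocuous $(\Se_2)$ auxiliary fact about duals into $(\Se_2)$ modules is the essential input that makes both implications close.
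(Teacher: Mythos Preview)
The paper does not actually prove this lemma; it is quoted verbatim from \cite{Da3} and used as a black box in the proof of Corollary~\ref{mainCor}. So there is no ``paper's own proof'' to compare against here.

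That said, your argument is correct and is the standard depth-chasing proof one would expect. The four-term exact sequences
\[
0 \to \Hom(\Omega^k M, N) \to \Hom(F_k, N) \to \Hom(\Omega^{k+1} M, N) \to \Ext^{k+1}(M, N) \to 0
\]
are exactly the right engine, and the auxiliary fact that $\Hom(X,N)$ inherits $(\Se_2)$ from $N$ is indeed the key closing input in both directions. Your bookkeeping is accurate: in the $(1)\Rightarrow(2)$ direction the bound $\depth \Hom(\Omega^{j-1}M,N)_\mathfrak{p} \ge n-j+2$ combined with $\depth N_\mathfrak{p}\ge n$ gives $\depth K_\mathfrak{p}\ge n-j+1\ge 2$, and then the $(\Se_2)$ bound on $\Hom(\Omega^j M,N)$ forces $\depth \Ext^j(M,N)_\mathfrak{p}\ge 1$, the desired contradiction. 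One small point worth making explicit when you write this up: the inequality $\depth N_\mathfrak{p}\ge n$ uses that $\mathfrak{p}\in\Supp N$ (clear, since $\Ext^j(M,N)_\mathfrak{p}\ne 0$) together with $\height\mathfrak{p}\ge n+1$ and the $(\Se_n)$ hypothesis on $N$; this is fine under the usual convention $\depth N_\mathfrak{p}\ge\min(n,\dim R_\mathfrak{p})$, which is the one in force since $R$ is Cohen--Macaulay.
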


Finally we shall need a refined version of the  Bourbaki sequence for a module: 

\begin{thm}(\cite[Theorem 1.4]{HJW})\label{moving}
Let $R$ be a commutative, Noetherian ring satisfying condition $(\Se_2)$. Let $M$ be a torsion-free $R$-module and $S$ be a finite set of prime ideals of $R$. Assume that $M$ is free of constant rank on $S$ and the set of height at most $1$ primes in $R$. Then there is  a Bourbaki sequence $\ses FMI$ such that $I\nsubseteq \bigcup_{P\in S}P$.
\end{thm}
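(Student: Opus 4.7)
My plan is to upgrade the classical generalized Bourbaki construction by a prime avoidance argument so that the resulting ideal $I$ misses every prime of $S$. The standard Bourbaki lemma, under the $(\Se_2)$ hypothesis on $R$ and the freeness of $M$ at height-$1$ primes, produces an exact sequence $\ses FMI$ with $F$ free of rank $r-1$ (where $r$ is the generic rank of $M$) and $I$ isomorphic to an ideal of $R$. My key reformulation: by prime avoidance on the finite set $S$, the condition $I \not\subseteq \bigcup_{P\in S}P$ is equivalent to $I \not\subseteq P$ for every $P \in S$, which in turn holds precisely when $F_P$ is a rank-$(r-1)$ free direct summand of $M_P$, so that the localized surjection $M_P \twoheadrightarrow I_P$ is a split surjection onto $R_P$.

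Accordingly, I would view the set of admissible Bourbaki embeddings $R^{r-1} \hookrightarrow M$ (those with $F$ free of rank $r-1$ and $M/F$ torsion-free of rank $1$) as a ``large'' subset $U_0$ of $\Hom_R(R^{r-1}, M) \cong M^{r-1}$. Within $U_0$, the extra requirement ``$F_P$ is a free corank-$1$ summand of $M_P$ for each $P \in S$'' carves out a further subset $U_S$, defined locally at each $P$ by the non-vanishing of some $(r-1)\times(r-1)$ minor of the matrix of images of $m_1,\dots,m_{r-1}$ in $M_P/PM_P$. The plan is to show $U_0 \cap U_S$ is nonempty. At each $P \in S$ individually, freeness of $M_P$ ensures many rank-$(r-1)$ free summands, so the local version of $U_S$ at $P$ is nonempty; a finite iterated prime-avoidance argument over $M^{r-1}$, exploiting that the ``$U_S$-bad'' locus at each $P \in S$ is a proper ``algebraic'' subset, produces a global tuple $(m_1, \dots, m_{r-1})$ satisfying the $S$-conditions. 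One then adjusts by perturbing within the affine subspace of elements lying in $\bigcap_{P \in S} PM$ (which preserves the $U_S$ conditions) to also meet the classical Bourbaki genericity defining $U_0$.

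The main obstacle is simultaneous satisfaction of the finite-prime constraints from $U_S$ and the generic Bourbaki condition $U_0$, which involves behavior at all height-$1$ primes of $R$ --- infinitely many of them. Each of $U_0$ and $U_S$ is individually nonempty and ``open'' in a suitable sense, but their intersection requires a transversality argument. My approach is to show that the $U_0$-bad locus (tuples for which $F$ fails to be free of rank $r-1$, or $M/F$ acquires torsion) is a proper subset of $M^{r-1}$ and therefore cannot swallow an entire positive-dimensional affine subspace of perturbations coming from $\bigcap_{P\in S}PM$; the properness uses both torsion-freeness of $M$ and freeness at height-$1$ primes, which is precisely what confines the bad locus to codimension at least~$2$. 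Verifying this transversality carefully is the technical heart of the proof, and is essentially where the $(\Se_2)$ hypothesis is used in an essential way.
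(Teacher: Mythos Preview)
The paper does not prove this statement at all: Theorem~\ref{moving} is quoted verbatim from \cite[Theorem 1.4]{HJW} and is used as a black box in the proof of Proposition~\ref{theta_vanish}. So there is no ``paper's own proof'' to compare your proposal against.

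Evaluating your proposal on its own merits: the overall strategy is sound, and your reformulation of the avoidance condition $I\not\subseteq P$ as ``$F_P$ is a corank-one free summand of $M_P$'' is exactly right. However, the argument has a genuine gap at the step you yourself flag as the ``technical heart.'' You assert that the $U_0$-bad locus ``cannot swallow an entire positive-dimensional affine subspace of perturbations coming from $\bigcap_{P\in S}PM$,'' but properness of a subset of $M^{r-1}$ does not by itself prevent it from containing an affine coset of a submodule; you would need something like irreducibility of $M^{r-1}$ in a Zariski-type topology together with a codimension count, and over an arbitrary Noetherian $(\Se_2)$ ring there is no such topology available that simultaneously makes $U_0$ and $U_S$ open dense. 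The classical Bourbaki condition $U_0$ concerns torsion-freeness of the quotient, which is controlled by behaviour at \emph{all} height-one primes; these are infinitely many and do not fit into a finite prime-avoidance framework.

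The standard route, which is what Huneke--Jorgensen--Wiegand actually do, avoids this transversality issue entirely by working inductively on the rank of $M$ using the theory of basic elements (in the sense of Eisenbud--Evans). At each stage one finds a single element $m\in M$ that is basic at every prime in a suitable \emph{finite} set (containing $S$ and the finitely many height-one primes at which $M$ requires more than the generic number of generators), so that $M/Rm$ remains torsion-free, free at height-one primes, and free at the primes of $S$. This reduces the infinite condition defining $U_0$ to a finite one at each inductive step, which is precisely what your transversality argument is missing.
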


\section{Main results}\label{mainTheorems}

Throughout this Section $R$ will be a local hypersurface of dimension $3$. All modules are finitely generated. Note that since $\depth R > 2$, $U_R$ is connected, so any module which is locally free on $U_R$ also has constant rank.

\begin{prop}\label{theta_vanish}
Let $M$ be reflexive $R$-module which is  locally free of constant rank  on $U_R$. Let $N$ be an  $R$-module which is locally free of constant rank on the minimal primes of $R$ and such that $c_1([N])$ is torsion in $\CH^1(R)$. Then $\h RMN=0$.
\end{prop}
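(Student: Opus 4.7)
The plan is to use bi-additivity of $\theta$ (Proposition \ref{decent}(2)) together with a prime filtration of $[N]$ in $G(R)$ and a Bourbaki-type reduction of $M$ via Theorem \ref{moving}, so that all remaining terms can be handled by Serre's intersection vanishing on the regular ring $T$ through Proposition \ref{decent}(1). First I would replace $M$ by its maximal Cohen-Macaulay approximation: since $M$ is reflexive we have $\depth M \geq 2 = \dim R - 1$, so by Subsection \ref{appro} the approximation sequence $0 \to W \to M_0 \to M \to 0$ has $W$ free; hence $\theta(M,N) = \theta(M_0, N)$, and $M_0$ is maximal Cohen-Macaulay and still locally free on $U_R$.

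Next I would reduce the problem to showing $\theta(M, R/p) = 0$ for each prime $p$ with $\height p \geq 2$. Writing a prime filtration, $[N] = \sum_p \alpha_p [R/p]$ in $G(R)$; the constant rank $r$ of $N$ on minimal primes ensures that the height zero contribution of $[N]$ matches that of $r[R]$, and since $\theta(M, R) = 0$ it suffices to consider $[N] - r[R] \in F^1 G(R)$, the subgroup generated by $[R/p]$ with $\height p \geq 1$. The height one summand has image $c_1([N]) \in \CH^1(R)$, which is torsion by hypothesis; so after multiplying by some positive integer $n$ the class $n\sum_{\height p = 1} \alpha_p [R/p]$ becomes a sum of principal divisors $\Div(f_i)$ modulo classes of $[R/q]$ with $\height q \geq 2$. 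But each $\Div(f_i)$ itself lies in $F^2 G(R)$, since $[R/f_iR] = \Div(f_i) + (\text{height} \geq 2 \text{ correction})$ while $[R/f_iR] = 0$ in $G(R)$ via the sequence $0 \to R \xrightarrow{f_i} R \to R/f_iR \to 0$. Therefore $n([N] - r[R]) \in F^2 G(R)$, and hence $n\theta(M, N)$ becomes an integer combination of terms $\theta(M, R/p)$ with $\height p \geq 2$.

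Finally I would show $\theta(M, R/p) = 0$ for each such $p$. If $p = m$ (height 3), then $R/p$ has finite length and $\dim M + 0 \leq 3 < 4 = \dim T$, so Proposition \ref{decent}(1) combined with Serre's intersection vanishing yields $\theta(M, R/m) = \chi^T(M, R/m) = 0$. If $\height p = 2$, I would apply Theorem \ref{moving} to $M$ with the set $S = \{p\} \cup \{\text{minimal primes of } R\}$, all of which lie in $U_R$: the hypothesis that $M$ is locally free of constant rank on $U_R$ gives the required freeness on $S$ and on height $\leq 1$ primes, yielding a Bourbaki sequence $0 \to F \to M \to I \to 0$ with $F$ free and $I$ a rank-one reflexive ideal satisfying $I \nsubseteq q$ for each $q \in S$. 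Bi-additivity then gives $\theta(M, R/p) = \theta(I, R/p)$, and the sequence $0 \to I \to R \to R/I \to 0$ yields $\theta(I, R/p) = -\theta(R/I, R/p)$. Because $I \nsubseteq p$, any prime containing $I + p$ strictly contains $p$ and hence equals $m$, so $R/(I+p) = R/I \otimes_R R/p$ has finite length; since $\dim R/I \leq 2$ and $\dim R/p = 1$, we have $\dim R/I + \dim R/p \leq 3 < 4 = \dim T$, and Proposition \ref{decent}(1) together with Serre's vanishing gives $\theta(R/I, R/p) = \chi^T(R/I, R/p) = 0$.

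The main obstacle in this plan is the filtration argument in the second paragraph: one must carefully identify principal divisors $\Div(f)$ with elements of $F^2 G(R)$ (using the triviality of $[R/fR]$ in $G(R)$ and the compatibility of the codimension filtration with prime decomposition) in order to absorb the height-one contributions into the height $\geq 2$ pile. Once this reduction is set up, the remaining work is an application of Theorem \ref{moving} and Serre's intersection vanishing over the regular local ring $T$.
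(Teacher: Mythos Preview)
Your approach matches the paper's: reduce $[N]$ in $\overline G(R)_{\mathbb Q}$ to a combination of $[R/P]$ with $\height P\geq 2$ using the torsion hypothesis on $c_1([N])$, then handle each $\theta^R(M,R/P)$ via a Bourbaki sequence (Theorem \ref{moving}) and Serre's vanishing (Proposition \ref{decent}(1)). Two minor points: the MCM approximation of $M$ in your first paragraph is superfluous, since Theorem \ref{moving} only needs $M$ torsion-free and locally free of constant rank on $S$ and on height $\leq 1$ primes; and your case split on $\height p\in\{2,3\}$ is unnecessary, as the Bourbaki argument already covers $p=m$.

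One correction you should make in the filtration step, exactly at the place you flagged as the main obstacle: since $R$ is a hypersurface but need not be a domain, the rational-equivalence relations in $\CH^1(R)$ are generated by $\Div(f_j,R/q_j)$ with $q_j$ a \emph{minimal prime} of $R$ and $f_j$ regular on $R/q_j$, not by principal divisors $\Div(f_i)$ on $R$ itself. The paper accordingly uses the exact sequence $0\to R/q_j\xrightarrow{f_j}R/q_j\to R/(q_j,f_j)\to 0$ to conclude $[R/(q_j,f_j)]=0$ in $G(R)$, and this is what absorbs the height-one terms into $F^2G(R)$. Your sequence $0\to R\xrightarrow{f_i}R\to R/f_iR\to 0$ is only exact when $f_i$ is a non-zero-divisor on $R$, which is not guaranteed here; with this adjustment your argument goes through.
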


\begin{proof}

Without loss of generality one can assume $c_1([N])=0$ by replacing $N$ with a direct sum of copies of $N$ if necessary. 
First we claim that  in $\overline G(R)_{\mathbb Q}$, the reduced Grothendieck group with rational coefficients, we have an equality $[N] = \sum a_i[R/P_i]$ such that each  $P_i \in \Spec R$ has height at least $2$. Since $N$ has constant rank $a$ we have a short exact sequence:

$$ \ses {R^a}N{N'} $$
where  $N'$ is a torsion module. Let $\mathcal F$ be a prime filtration of $N'$. Clearly $\mathcal F$ involves only primes of height at least $1$. Let $s$ be the formal sum of all height $1$ primes in $\mathcal F$. Since $c_1([N']=c_1([N])=0$ we have formally (see Subsection \ref{ChowPicard}):
$$ s = \sum n_j\Div(f_j, R/q_j)$$ 
here the $n_j$ are integers, each $q_j$ is a minimal prime of $R$ and $f_j$ is a regular element in $R/q_j$ and by definition:
$$\Div(f_j, R/q_j)= \sum \ell(R/(q_j,f_j)_p)[R/p] $$  
(the sum runs over all primes of height $1$ in $\Supp(R/(q_j,f_j)$). The above formal equality shows that in $G(R)$ one has:
$$[N'] = \sum n_j[R/(q_j,f_j)] + \sum a_i[R/p_i] $$
such that all the primes $p_i$ are of height at least $2$ and the $a_i$ are integers. But the exact sequence $\ses {R/q_j}{R/q_j}{R/(f_j,q_j)}$ shows that each $[R/(q_j,f_j)]=0$ in $G(R)$, so our claim follows.    

Because of  the claim above we will be done by showing that $\h RM{R/P}=0$ for each $P \in \Spec R$ such that $\height P\geq 2$. 

By  Theorem \ref{moving} one can  construct a Bourbaki sequence for $M$:
$$\ses FMI $$
such that $I \subsetneq P$.  Obviously $\h RM{R/P} =\h RI{R/P}$.  But $R/I\tensor_RR/P$ has finite length, and $\dim R/I +\dim R/P \leq 3 =\dim R $. By \ref{decent} $\h R{R/I}{R/P}=0$. Since $\h RI{R/P}=-\h R{R/I}{R/P}$ we are done. 

\end{proof}

\begin{prop}\label{mainProp}
Let $M\in \MCM(R)$  such that $M$ is locally free on $U_R$ and $N$ be any finitely generated $R$-module. Suppose that  $\h R{M^*}N=0$. If   $\Ext^1_R(M,N)=0$ then $M$ is free or $\pd_RN<\infty$.
\end{prop}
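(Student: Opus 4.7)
The plan is to first use MCM approximation to reduce to the case where $N$ is itself maximal Cohen-Macaulay, then exploit the two-periodic matrix-factorization resolutions over a hypersurface to upgrade the single-degree vanishing $\Ext^1_R(M,N)=0$ together with $\theta^R(M^*,N)=0$ to vanishing of $\Ext^i_R(M,L)$ (equivalently $\Tor_i^R(M^*,L)$) in \emph{all} positive degrees, and finally invoke a hypersurface rigidity/support-variety argument to force freeness of $M$ or $L$.

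\emph{Reduction to MCM $N$.} Take the MCM approximation $\ses WLN$ of Subsection~\ref{appro}, with $L\in\MCM(R)$ and $\pd_R W<\infty$. Since $M$ and $M^*$ are totally reflexive over the Gorenstein ring $R$, induction on $\pd_R W$ using $\Ext^i_R(M,R)=\Tor_i^R(M^*,R)=0$ for $i\geq 1$ yields $\Ext^i_R(M,W)=\Tor_i^R(M^*,W)=0$ for all $i\geq 1$. The corresponding long exact sequences, together with bi-additivity of $\theta$, transfer the hypotheses to $L$: $\Ext^1_R(M,L)=0$ and $\theta^R(M^*,L)=0$. Moreover $\pd_R N<\infty$ is equivalent to $L$ being free, so it suffices to prove the proposition assuming $N=L$ is MCM.

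\emph{Total vanishing via periodicity.} Over the hypersurface $R$, a MCM module $M$ admits a $2$-periodic minimal free resolution from a matrix factorization $(\varphi,\psi)$ of $f$; its $R$-dual (exact because $\Ext^i_R(M,R)=0$ for $i\geq 1$) is a $2$-periodic resolution of $M^*$. Under the canonical identification $\Hom_R(F,L)\cong F^*\otimes_R L$, the two complexes computing $\Ext^*_R(M,L)$ and $\Tor^R_*(M^*,L)$ literally coincide, giving
\[
\Ext^i_R(M,L)\;\cong\;\Tor_i^R(M^*,L)\qquad (i\geq 1).
\]
Thus $\Tor_1^R(M^*,L)=0$. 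Since $M^*$ is locally free on $U_R$, each $\Tor_i^R(M^*,L)$ has finite length for $i\geq 1$, and the $2$-periodicity of $\Tor$ (valid because $M^*$ is MCM over a hypersurface) gives
\[
\theta^R(M^*,L)=\ell(\Tor_2^R(M^*,L))-\ell(\Tor_1^R(M^*,L))=\ell(\Tor_2^R(M^*,L)).
\]
The hypothesis $\theta^R(M^*,L)=0$ therefore forces $\Tor_2^R(M^*,L)=0$, and periodicity extends this to $\Tor_i^R(M^*,L)=\Ext^i_R(M,L)=0$ for all $i\geq 1$.

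\emph{Freeness (the main obstacle).} From the total vanishing of $\Tor^R_*(M^*,L)$ in positive degrees, with $M^*,L$ both MCM and $M^*$ locally free on $U_R$, I must conclude that $M$ or $L$ is free. This is the crux. The natural tool is Avramov-Buchweitz's support-variety theory for complete intersections: an MCM module over a hypersurface has complexity at most $1$, and total $\Tor$-vanishing in positive degrees forces $\cx_R(M^*)+\cx_R(L)\leq 1$. Hence one of the two complexities is zero; the corresponding module has finite projective dimension and, being MCM over the Gorenstein ring $R$, is free. Since $M^*\cong R^n$ iff $M\cong R^n$ (by reflexivity), we conclude that $M$ is free or $L$ is free, so $\pd_R N<\infty$. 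The hypothesis that $M$ is locally free on $U_R$ is essential throughout: it makes $\theta^R(M^*,-)$ well-defined, produces the finite-length Tor needed to use the length formula, and is what enables the complexity/support-variety dichotomy to bite in the final step.
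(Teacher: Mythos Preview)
Your argument is essentially correct and lands on the same mechanism as the paper, but there is one slip worth flagging and the packaging differs.

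\medskip
\noindent\textbf{The indexing slip.} The claim that dualizing the periodic resolution of $M$ yields a resolution of $M^*$ whose tensor with $L$ ``literally coincides'' with $\Hom_R(\mathbf F,L)$, giving $\Ext^i_R(M,L)\cong \Tor_i^R(M^*,L)$, is off by one shift. Dualizing $\mathbf F$ gives a \emph{co}resolution $0\to M^*\to F_0^*\to F_1^*\to\cdots$; turning this into a projective resolution of $M^*$ via the $2$-periodicity introduces a degree shift, and one actually gets $\Ext^i_R(M,L)\cong \Tor_{i+1}^R(M^*,L)$ (equivalently $\Tor_i^R(\Omega M^*,L)$). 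A quick check over $R=k[[x,y]]/(xy)$ with $M=R/(x)$, $L=R/(y)$ shows $\Ext^1\cong k$ while $\Tor_1(M^*,L)=0$ and $\Tor_2(M^*,L)\cong k$. This does not damage your proof: from $\Ext^1_R(M,L)=0$ you get $\Tor_2^R(M^*,L)=0$, and then $\theta^R(M^*,L)=\ell(\Tor_2)-\ell(\Tor_1)=-\ell(\Tor_1)=0$ forces $\Tor_1^R(M^*,L)=0$, whence total vanishing by periodicity. So the conclusion stands; only the sentence ``Thus $\Tor_1^R(M^*,L)=0$'' needs to read $\Tor_2$.

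\medskip
\noindent\textbf{Comparison with the paper.} The paper does not reduce $N$ to an MCM module; instead it invokes the four-term exact sequence
\[
\Tor_2^R(M_1,N)\to \Ext^1_R(M,R)\otimes_R N\to \Ext^1_R(M,N)\to \Tor_1^R(M_1,N)\to 0,
\]
where $M_1$ is the cokernel of $F_1^*\to F_2^*$, and observes that $M_1\cong \Omega M^*$. This gives $\Tor_1^R(M_1,N)=0$ directly, i.e.\ exactly the corrected version of your matrix-factorization identification $\Ext^1_R(M,N)\cong\Tor_2^R(M^*,N)$. From there both arguments proceed identically: $\theta$-vanishing plus $2$-periodicity kills all higher $\Tor$, and then one of the modules must have finite projective dimension. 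The paper cites the Huneke--Wiegand/Miller theorem for this last step; your complexity/support-variety formulation (over a hypersurface the varieties live in $\mathbb A^1$, so disjointness forces one to be $\{0\}$) is exactly the same statement. Your MCM-approximation reduction is harmless but unnecessary: since $M^*$ is already MCM, $\Tor_i^R(M^*,-)$ is $2$-periodic from $i=1$ for \emph{any} $N$, and the Huneke--Wiegand conclusion gives ``$M^*$ free or $\pd_R N<\infty$'' without first replacing $N$ by $L$. Finally, the remark that local freeness of $M$ on $U_R$ is what makes the complexity dichotomy ``bite'' is inaccurate: that dichotomy needs no such hypothesis; local freeness is used only to guarantee finite-length $\Tor$ so that $\theta$ is defined.
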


\begin{proof}
One has the following short exact sequence (see \cite[3.6]{Ha} or \cite{Jo}, \cite{Jor}):
$$ \Tor_2^R(M_1,N) \to \Ext^1_R(M,R)\tensor_R N \to \Ext^1_R(M,N) \to \Tor_1^R(M_1,N) \to 0 $$
Here $M_1$ is the cokernel of $F_1^* \to F_2^*$, where ${\bf F}: \cdots \to F_2 \to F_1\to F_0 \to M \to 0$ is a minimal resolution of $M$.  Since $\Ext^1_R(M,N)=0$ it follows that $\Tor^1_R(M_1,N) =0$.

Since $M$ is $\MCM$ and $R$ is a hypersurface  we know that the minimal resolution $\bf F$ is periodic of period at most $2$ (see \cite{Ei} and $\Ext_R^i(M,R)=0$ for $i>0$. It follows that the dual complex $F^*$ is also exact and periodic of period at most $2$. Thus  $M_1$ is  isomorphic to the first syzygy of $M^*$. In particular, $M_1$ is maximal Cohen-Macaulay or zero. Since 
$\h R{M_1}N=-\h R{M^*}N=0$, it now follows that $\Tor_i^R(M_1,N)=0$ for all $i>0$ (as $M_1$ is maximal Cohen-Macaulay, the sequence of modules $\{\Tor_i^R(M_1,N)\}$ is periodic of period $2$ for $i>0$). 
So either $M_1$ or $N$ has finite projective dimension by \cite[Theorem 1.9]{HW2} or \cite[1.1]{Mil}. But if $M_1$ has finite projective dimension and is non-zero, it must be free by the Auslander-Buchsbaum formula, contradicting the minimality of ${\bf F}$. Thus $M_1$ is zero and $M$ must be free.

\end{proof}

\begin{cor}\label{mainCor}
Let $(R, \mathfrak m)$ be local hypersurface of dimension $3$. Let $N$ be a reflexive $R$-module which is  locally free   on $U_R$. Assume that $\h R{N^*}N=0$. Then $\Hom_R(N,N) \in \MCM(R)$ if and only if $N$ is free.  

\end{cor}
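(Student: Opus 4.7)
The direction $N$ free $\Rightarrow \Hom_R(N,N) \in \MCM(R)$ is immediate since $N \cong R^r$ gives $\Hom_R(N,N) \cong R^{r^2}$. For the converse the plan has three stages: first extract $\Ext^1_R(N,N) = 0$; next pass to a maximal Cohen-Macaulay approximation of $N$ and apply Proposition \ref{mainProp}; and finally rule out the remaining possibility $\pd_R N = 1$ by a socle argument in local cohomology.

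For the first stage I apply Lemma \ref{useful} with $M = N$ and $n = 2$. The hypotheses are met: $\Hom_R(N,N)$ is MCM and thus satisfies $(\Se_3)$, $N$ is reflexive and thus satisfies $(\Se_2)$, and $N$ is locally free in codimension $2$ because it is locally free on $U_R$. Implication (1)$\Rightarrow$(2) of the lemma then yields $\Ext^1_R(N,N) = 0$.

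For the second stage, Subsection \ref{appro} provides an MCM approximation $\ses W M N$, and since $\depth N \geq 2 = \dim R - 1$ by reflexivity, the remark in \ref{appro} forces $W$ to be free; hence $M$ is also locally free on $U_R$. To apply Proposition \ref{mainProp} to $(M, N)$ I need $\Ext^1_R(M, N) = 0$ and $\theta^R(M^*, N) = 0$. The first follows from $\Hom_R(-, N)$ applied to the approximation, using $\Ext^1_R(N, N) = 0$ and $\Ext^1_R(W, N) = 0$. For the second, dualizing the approximation and using $\Ext^1_R(M, R) = 0$ (MCM over Gorenstein) and the finite length of $E := \Ext^1_R(N, R)$ (since $N$ is locally free on $U_R$), I obtain a four-term exact sequence $0 \to N^* \to M^* \to W^* \to E \to 0$, giving the relation $[M^*] = [N^*] + [W^*] - [E]$ in $G(R)$. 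Bi-additivity of $\theta$ together with the hypothesis $\theta^R(N^*, N) = 0$, the vanishing $\theta^R(W^*, N) = 0$ because $W^*$ is free, and $\theta^R(E, N) = 0$ from Proposition \ref{decent}(1) (since $\dim E + \dim N \leq 0 + 3 = \dim R$), yield $\theta^R(M^*, N) = 0$. Proposition \ref{mainProp} then says $M$ is free or $\pd_R N < \infty$; in either case, combined with $\depth N \geq 2$ and the Auslander-Buchsbaum formula, I get $\pd_R N \leq 1$.

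For the third stage, suppose for contradiction $\pd_R N = 1$, with minimal free resolution $0 \to F_1 \xrightarrow{\phi} F_0 \to N \to 0$; minimality forces $\phi$ to have entries in $\mathfrak m$. Applying $\Hom_R(-, N)$ and using $\Ext^1_R(N, N) = 0$ produces the short exact sequence
\[ 0 \to \Hom_R(N, N) \to N^{a_0} \to N^{a_1} \to 0, \]
and the right-hand map is again given by a matrix with entries in $\mathfrak m$. Since $\Hom_R(N,N)$ is MCM, $H^i_{\mathfrak m}(\Hom_R(N,N)) = 0$ for $i \leq 2$, so the induced map $H^2_{\mathfrak m}(N)^{a_0} \to H^2_{\mathfrak m}(N)^{a_1}$ is injective. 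But its restriction to $\Soc(H^2_{\mathfrak m}(N))^{a_0}$ is identically zero, since $\mathfrak m$ annihilates the socle; injectivity therefore forces $\Soc(H^2_{\mathfrak m}(N)) = 0$, and because $H^2_{\mathfrak m}(N)$ is Artinian it must vanish. Hence $\depth N = 3$, so $N$ is MCM, and then $\pd_R N < \infty$ together with the Auslander-Buchsbaum formula gives $\pd_R N = 0$, contradicting the assumption $\pd_R N = 1$. The main obstacle I anticipate is precisely this final socle argument, since the preceding hypothesis verification for Proposition \ref{mainProp} is largely routine bookkeeping with bi-additivity and long exact sequences, whereas the elimination of the $\pd_R N = 1$ case genuinely exploits the minimality of the resolution together with the Artinianness of local cohomology.
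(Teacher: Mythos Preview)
Your proposal is correct and tracks the paper's proof essentially step for step through Stages~1 and~2: the use of Lemma~\ref{useful} to extract $\Ext^1_R(N,N)=0$, the passage to the MCM approximation $\ses{W}{M}{N}$ with $W$ free, the verification of $\Ext^1_R(M,N)=0$ and $\theta^R(M^*,N)=0$ via the four-term dual sequence and bi-additivity, and the invocation of Proposition~\ref{mainProp} to reach $\pd_R N \le 1$ are all exactly as in the paper.

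The only genuine divergence is Stage~3. The paper eliminates $\pd_R N = 1$ in a single line by Nakayama's Lemma: with a minimal resolution $0 \to F_1 \xrightarrow{\phi} F_0 \to N \to 0$ the map $\Hom_R(\phi,N): N^{a_0} \to N^{a_1}$ has image contained in $\mathfrak m N^{a_1}$, so its cokernel $\Ext^1_R(N,N)$ surjects onto $N^{a_1}/\mathfrak m N^{a_1}$; vanishing of $\Ext^1_R(N,N)$ then forces $a_1 = 0$. Your local-cohomology socle argument is valid, but it is a detour: the very observation you make (that the map has entries in $\mathfrak m$) already finishes the job at the level of modules, without ever passing to $H^2_{\mathfrak m}$. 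So the step you anticipated as the main obstacle is precisely where the paper's route is most elementary.
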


\begin{proof}
The sufficient direction is trivial. Suppose that $\Hom_R(N,N)$ is maximal Cohen-Macaulay. Then by Lemma \ref{useful} $\Ext^1_R(N,N)=0$. 
We look at the MCM approximation of $N$ as in \ref{appro}:
$$\ses WMN $$ 
As the discussion in  \ref{appro} indicates, $W$ is free. Applying $\Hom_R(-,N)$ we obtain $\Ext^1_R(M,N)=0$. 
Also, applying $\Hom_R(-,R )$ yields:

$$0 \to N^* \to M^* \to W^* \to \Ext_R^1(N,R) \to 0 $$
since $\Ext_R^1(M,R)=0$ as its Matlis dual  is $\LC_{\mathfrak m}^2(M)=0$.  Note that $\h RL{-}$ is always defined if $L$ is any of the four modules in the above exact sequence, and it is $0$ when $L=W^*$  or $L= \Ext_R^1(N,R)$ (the latter is because $\Ext_R^1(N,R)$  has finite length and Proposition \ref{decent}). So $\h R{M^*}N = \h R{N^*}N=0$.  

Proposition \ref{mainProp} shows that either $M$ is free or $\pd_RN<\infty$.  
Both possibilities imply that $ \pd_RN<\infty$. As $N$ is reflexive and $\dim R=3$, $\pd_RN\leq 1$. But $\Ext^1_R(N,N)=0$, so $\pd_RN$ can not be $1$ by Nakayama's Lemma, thus $N$ is free. 
\end{proof}

We have gathered enough to prove our main result:
\begin{thm}\label{main}
Let $R$ be local hypersurface of dimension $3$. Let $N$ be a reflexive $R$-module which is  locally free   on $U_R$. Furthermore, assume that the image $c_1([N])$  (of $N$ as an element in $G(R)$) is torsion in $\CH^1(R)$. Then $\Hom_R(N,N) \in \MCM(R)$ if and only if $N$ is free.  
\end{thm}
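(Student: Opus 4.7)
The plan is to reduce the theorem to Corollary \ref{mainCor}, whose single extra hypothesis beyond those already imposed on $N$ is the vanishing $\theta^R(N^*,N) = 0$; I will obtain this vanishing by applying Proposition \ref{theta_vanish}.

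The ``if'' direction is trivial: when $N$ is free, $\Hom_R(N,N)$ is free, hence maximal Cohen-Macaulay. For the converse, the key step is to invoke Proposition \ref{theta_vanish} with its first argument taken to be $N^*$ and its second argument taken to be $N$ itself. The hypotheses to check are that $N^*$ is reflexive and locally free of constant rank on $U_R$; that $N$ is locally free of constant rank on the minimal primes of $R$; and that $c_1([N])$ is torsion in $\CH^1(R)$. The last is given. For the second, since $R$ is a hypersurface and therefore Cohen-Macaulay, its minimal primes lie in $U_R$, so $N$ is locally free on them by hypothesis, and the opening paragraph of this section ensures constant rank. For the first, reflexivity of $N^*$ is formal from reflexivity of $N$ (applying $\Hom_R(-,R)$ to the isomorphism $N \cong N^{**}$ yields $N^* \cong (N^*)^{**}$), while local freeness on $U_R$ is preserved under dualization, with rank equal to the rank of $N$.

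Proposition \ref{theta_vanish} then yields $\theta^R(N^*,N) = 0$, and Corollary \ref{mainCor} immediately delivers the conclusion that $N$ is free. There is no real obstacle at this final step; all the substantive work has already been done in the two prior results, and the role of the torsion Chern class hypothesis is precisely to feed into Proposition \ref{theta_vanish} so as to force the theta pairing to vanish, thereby unlocking the homological machinery of Corollary \ref{mainCor}.
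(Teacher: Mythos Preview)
Your proof is correct and follows exactly the route the paper takes: the paper's own proof is the single sentence ``A combination of Proposition~\ref{theta_vanish} and Corollary~\ref{mainCor} give the desired result,'' and you have simply filled in the routine verification that the hypotheses of Proposition~\ref{theta_vanish} hold for the pair $(N^*,N)$ so that $\theta^R(N^*,N)=0$, after which Corollary~\ref{mainCor} applies directly.
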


\begin{proof}
A combination of Proposition \ref{theta_vanish} and Corollary \ref{mainCor} give the desired result.  
\end{proof}

\begin{cor}\label{Gabber}
Let $R$ be local hypersurface of dimension $3$. Then $\Pic{U_R}$ is torsion-free. 
\end{cor}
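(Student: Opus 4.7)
The plan is to reduce the statement to Theorem \ref{main} by associating, to any torsion class in $\Pic(U_R)$, a rank-one reflexive $R$-module $I$ that fulfills both hypotheses of that theorem, and then invoke the theorem to conclude that $I$ is free, i.e.\ that the class was trivial to begin with.

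More precisely, let $[\mathcal E]\in\Pic(U_R)$ be a torsion element and set $I=\Gamma_{U_R}(\mathcal E)$. Since $R$ is a three-dimensional hypersurface, $\depth R=3\geq 2$, so the Horrocks equivalence recalled in Subsection \ref{Horrocks} applies and tells us that $I$ is a finitely generated reflexive $R$-module, locally free of rank one on $U_R$. Moreover, the same discussion gives
\[
\Hom_R(I,I)\ \cong\ \Gamma_{U_R}\bigl(\SHom_{\mathcal O_{U_R}}(\mathcal E,\mathcal E)\bigr)\ \cong\ \Gamma_{U_R}(\mathcal O_{U_R})\ =\ R.
\]
Because $R$ is Cohen--Macaulay, $R$ is a maximal Cohen--Macaulay module over itself; hence $\Hom_R(I,I)$ is in $\MCM(R)$, verifying the first hypothesis of Theorem \ref{main}.

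For the second hypothesis, I would use the diagram of maps in Subsection \ref{ChowPicard}, together with the observation recorded there that $p([\mathcal E])=c_1([I])$ in $\CH^1(R)$. Since $p\colon \Pic(U_R)\to \CH^1(R)$ is a homomorphism of abelian groups and $[\mathcal E]$ is torsion, it follows immediately that $c_1([I])$ is torsion in $\CH^1(R)$. Theorem \ref{main} applied to $N=I$ then yields that $I$ is a free $R$-module; as $I$ has rank one, $I\cong R$, and therefore $\mathcal E\cong \mathcal O_{U_R}$ in $\Pic(U_R)$. This shows that the torsion subgroup of $\Pic(U_R)$ is trivial.

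There is no genuine difficulty here beyond the bookkeeping outlined above, since all of the nontrivial homological input has already been absorbed into Theorem \ref{main}. The one place to be a little careful is the passage to $I$: one must ensure that $I$ is actually reflexive (not merely torsion-free) and locally free of rank one on $U_R$, so that $\Hom_R(I,I)\cong R$ really holds; this is exactly what the Horrocks statement in Subsection \ref{Horrocks} provides given $\depth R\geq 2$. Once this is observed, the corollary is a direct translation of the main theorem into the language of line bundles on the punctured spectrum.
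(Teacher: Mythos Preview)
Your proposal is correct and follows essentially the same approach as the paper: take $I=\Gamma_{U_R}(\mathcal E)$ for a torsion class $[\mathcal E]$, use Subsection \ref{Horrocks} to identify $I$ as reflexive and locally free on $U_R$ with $\Hom_R(I,I)\cong R\in\MCM(R)$, use Subsection \ref{ChowPicard} to see $c_1([I])$ is torsion, and invoke Theorem \ref{main}. The paper's proof is terser, leaving the verification that $\Hom_R(I,I)\in\MCM(R)$ implicit (it was recorded in Subsection \ref{Horrocks}), but the logic is identical.
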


\begin{proof} 
Let $\mathcal E$ represent a  torsion element in $\Pic{U_R}$. By \ref{Horrocks}  $I= \Gamma_X(\mathcal E)$ is  a reflexive ideal which is locally free of rank $1$ on $U_R$.  By the diagram in Subsection \ref{ChowPicard} we know that  $c_1([I])$ is torsion in $\CH^1(R)$. Theorem \ref{main} now applies directly to give the desired result.

\end{proof}

Finally we note some interesting consequences of the main results above:

\begin{thm}\label{UFD}
Let $R$ be a local hypersurface with isolated singularity and $\dim R=3$. The following are equivalent:

\begin{enumerate}

\item $\h RMN=0$ for all $M,N \in\modu(R)$.
\item $R$ is a unique factorization domain (equivalently, $\CH^1(R)=\Cl(R)=0$). 
\item The class group $\Cl(R)$ is torsion.

\end{enumerate}
\end{thm}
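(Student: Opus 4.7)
The plan is to verify the three implications by directly invoking the machinery already established in the paper, after a short preliminary remark about $R$. First I would observe that a dimension-three hypersurface with isolated singularity is automatically a normal domain: Serre's criterion gives normality (isolated singularity yields $R_1$, Cohen--Macaulay gives $S_2$), and irreducibility follows because a proper factorization $f=gh$ of the defining equation in the regular cover would force $\Sing(R)$ to contain the set $V(g)\cap V(h)$, which has dimension at least $2$ in $\Spec R$, contradicting the isolated singularity hypothesis. Consequently $\Cl(R)\cong\Pic(U_R)$, since $U_R$ is regular and shares its height-one primes with $\Spec R$.

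Given this, (2)$\Leftrightarrow$(3) is immediate: (2)$\Rightarrow$(3) is trivial, and (3)$\Rightarrow$(2) follows at once from Corollary \ref{Gabber}, which asserts that $\Pic(U_R)\cong\Cl(R)$ is torsion-free and hence forces a torsion class group to vanish.

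For (2)$\Rightarrow$(1), I would reduce an arbitrary $M\in\modu(R)$ to its second syzygy $M':=\syz^2_R(M)$. Since $R$ is Gorenstein of dimension $3$, the module $M'$ satisfies $(\Se_2)$ and is torsion-free, hence reflexive; at any $Q\in U_R$ the local ring $R_Q$ is regular of dimension at most $2$, so $M'_Q$ is a second syzygy over such a ring and must therefore be free. The classes $[M]$ and $[M']$ in $G(R)$ differ only by classes of free modules, so the biadditivity of $\theta^R$ in Proposition \ref{decent}(2) gives $\theta^R(M,N)=\theta^R(M',N)$. Since $R$ is a UFD we have $\CH^1(R)=\Cl(R)=0$, so $c_1([N])$ is (trivially) torsion, and Proposition \ref{theta_vanish} applies to yield $\theta^R(M',N)=0$.

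For (1)$\Rightarrow$(2), let $I$ be any reflexive rank-one $R$-module. By isolated singularity $I$ is locally free on $U_R$, and by Subsection \ref{Horrocks} one has $\Hom_R(I,I)\cong R$, which is trivially in $\MCM(R)$. Hypothesis (1) provides $\theta^R(I^{*},I)=0$, so Corollary \ref{mainCor} forces $I$ to be free. Since every class in $\Cl(R)$ is represented by such an $I$, this proves $\Cl(R)=0$. The only conceptually non-trivial step in the whole argument is the reduction in (2)$\Rightarrow$(1): recognizing that second syzygies supply exactly the reflexive, locally-free-on-$U_R$ input that Proposition \ref{theta_vanish} demands, while altering $\theta^R$ only by the (vanishing) contribution of free summands. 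Everything else is a one-line application of an earlier result.
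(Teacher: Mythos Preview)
Your proof is correct and follows essentially the same route as the paper: $(1)\Rightarrow(2)$ via Corollary~\ref{mainCor} applied to a reflexive ideal, $(2)\Leftrightarrow(3)$ via Corollary~\ref{Gabber}, and $(2)\Rightarrow(1)$ via Proposition~\ref{theta_vanish}. The one notable difference is that for $(2)\Rightarrow(1)$ the paper simply writes ``follows from \ref{theta_vanish}'', whereas you explicitly pass to the second syzygy $M'=\syz^2_R(M)$ to manufacture a module that is reflexive and locally free on $U_R$ before invoking that Proposition; this step is indeed needed, since Proposition~\ref{theta_vanish} is stated only for such $M$, and your use of biadditivity (Proposition~\ref{decent}(2)) to equate $\theta^R(M,N)$ with $\theta^R(M',N)$ is the clean way to fill the gap. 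Your hands-on irreducibility argument in the preamble is fine but unnecessary: once you have normality from Serre's criterion, a local normal ring is automatically a domain (this is how the paper argues, citing \cite[23.8]{mat}).
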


\begin{proof}
First, since $R$ is local and normal (by Serre's criterion), it is a domain (see \cite[Theorem 23.8]{mat}). Assume $(1)$.  Let $I$ be a reflexive ideal representing an element of $\Cl(R)$. Then $\Hom_R(I,I) \cong R$, and  Corollary \ref{mainCor} implies $I$ is principal, so $\Cl(R)=0$. 

The implication $(2) \Rightarrow (1)$  follows from \ref{theta_vanish}. 
The equivalence $(2) \Leftrightarrow (3)$ is implied by main Theorem \ref{Gabber}.

\end{proof}

\begin{rmk}
If $\hat R$ is a hypersurface in an equicharacteristic or unramified regular local ring then the above result follows from \cite[Corollary 3.5]{Da1} and \cite[Theorem 3.4]{Da2}. We also note that the equivalence $(1)\Leftrightarrow (3)$ when $k=\mathbb C$ and $R$ is graded is obtained in \cite[3.10, 6.1]{MPSW}.
\end{rmk}

\section{Open questions}\label{open}

In this section we discuss some open questions motivated by the results obtained previously. Clearly, the most important question is whether or not Theorem \ref{mainTheorem} is true when $R$ is a local complete intersection of dimension $3$. Affirmation of such a statement would immediately prove Gabber's Conjecture \ref{GabberConj}. 
Since for an $R$-module $M$, $c_1([M])$ will be  torsion in $\CH^1(R)$ if $[M]=0$ in $\overline G(R)_{\mathbb Q}$,
a possibly weaker but somewhat less technical version would be:

\begin{conj}\label{genGabber}
Let $R$ be local complete intersection of dimension $3$. Let $N$ be a reflexive $R$-module which is  locally free of constant rank  on $U_R$. Furthermore, assume that $[N]=0$ in $\overline G(R)_{\mathbb Q}$, the reduced Grothendieck group of $R$ with rational coefficients. Then $\Hom_R(N,N)$ is a maximal Cohen-Macaulay $R$-module if and only if $N$ is free.  
\end{conj}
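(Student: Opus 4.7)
The plan is to transplant the three-step structure of the hypersurface proof into the complete intersection setting, replacing Hochster's $\theta$ by a higher-codimensional analogue and replacing periodicity of minimal resolutions by the theory of support varieties. Assume after completion that $\hat R\cong T/(f_1,\dots,f_c)$ with $T$ regular. The first step is to generalize Proposition \ref{theta_vanish}. Over a complete intersection of codimension $c$, Dao's $\eta$-invariant, obtained by suitably normalizing the partial Euler characteristics $\sum_{i=0}^{n}(-1)^i\ell(\Tor_i^R(M,N))$ and taking a limit, is defined whenever $M$ is locally of finite projective dimension on $U_R$, and it is bi-additive on short exact sequences. Consequently $\eta^R(M,-)$ factors through $\overline G(R)_{\mathbb Q}$, so the hypothesis $[N]=0$ there automatically yields $\eta^R(M,N)=0$ for every reflexive $M$ that is locally free on $U_R$. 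This is in fact a stronger input than the torsion of $c_1([N])$ used in the hypersurface case, so no Bourbaki-plus-intersection-multiplicity manipulation is needed here.

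The second step, the maximal Cohen-Macaulay approximation and its formal consequences, transports almost verbatim from Corollary \ref{mainCor}. Since $R$ is Gorenstein and $\depth N\geq 2=\dim R-1$, there is a short exact sequence $\ses WMN$ with $W$ free and $M\in\MCM(R)$, and Lemma \ref{useful} gives the chain $\Hom_R(N,N)\in\MCM(R)\Rightarrow\Ext^1_R(N,N)=0\Rightarrow\Ext^1_R(M,N)=0$. Dualizing into $R$ yields $\eta^R(M^*,N)=\eta^R(N^*,N)=0$, because $W^*$ is free and $\Ext^1_R(N,R)$ has finite length, and both kinds of modules lie in the kernel of $\eta^R(-,N)$.

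The hard step is to upgrade Proposition \ref{mainProp}: given $M\in\MCM(R)$ locally free on $U_R$ with $\Ext^1_R(M,N)=0$ and $\eta^R(M^*,N)=0$, one must conclude that $M$ is free or $\pd_R N<\infty$. In codimension one this used (i) periodicity of the minimal resolution, so that $M_1:=\syz M^*$ is again maximal Cohen-Macaulay, and (ii) the Huneke-Wiegand / Jorgensen / Miller rigidity theorem over hypersurfaces, which converts the vanishing of a single $\Tor$ plus $\theta=0$ into the vanishing of all higher $\Tor$. For $c\geq 2$ Avramov's cohomological operators still furnish a change-of-rings machinery and an Auslander-Bridger-style sequence relating $\Ext^1_R(M,N)$ with $\Tor^R_1(M_1,N)$ and $\Tor^R_2(M_1,N)$, so (i) has a workable replacement, although $M_1$ now only has bounded complexity rather than being maximal Cohen-Macaulay. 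The true obstacle is (ii): one needs an $\eta$-rigidity principle for complete intersections of codimension at least two, asserting that $\eta^R(M_1,N)=0$ together with the vanishing of a single $\Tor^R_i(M_1,N)$ forces all higher $\Tor$ to vanish. This is known under restrictive hypotheses, for example when the complexity $\cx(M_1,N)$ is at most one, or under liftability of the pair along the defining regular sequence, but not in full generality. Closing this gap is where I expect the main work to lie; I would attempt it by combining the support-variety techniques of Avramov-Buchweitz-Jorgensen with the vanishing of $\eta$, or alternatively by deforming $R$ along one of the $f_i$ so as to reduce to a family of codimension $c-1$ complete intersections and inducting on $c$, with the base case supplied by Theorem \ref{main}.
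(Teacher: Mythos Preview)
The statement you are attempting to prove is Conjecture~\ref{genGabber} in the paper, listed under ``Open questions''; the paper does \emph{not} prove it and explicitly presents it as unresolved. So there is no ``paper's own proof'' to compare your proposal against. What the paper does offer is a reduction: it formulates Conjecture~\ref{ci_rigid} (a $\Tor$-rigidity statement for pairs $(M,N)$ over local complete intersections with $M$ locally free on $U_R$ and $[N]=0$ in $\overline G(R)_{\mathbb Q}$) and observes that, by re-running the proofs of Proposition~\ref{mainProp} and Corollary~\ref{mainCor}, Conjecture~\ref{ci_rigid} in dimension $3$ would imply Conjecture~\ref{genGabber}.

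Your proposal is not a proof either, and to your credit you say so: you correctly isolate the missing ingredient as an $\eta$-rigidity principle in codimension $c\geq 2$, which is essentially the content of the paper's Conjecture~\ref{ci_rigid}. So your diagnosis and the paper's agree. Your first two steps (bi-additivity of $\eta$ forcing $\eta^R(M,N)=0$ directly from $[N]=0$ in $\overline G(R)_{\mathbb Q}$, and the MCM approximation argument of Corollary~\ref{mainCor}) are sound and indeed simpler than the hypersurface case, since the Grothendieck-group hypothesis bypasses the Bourbaki-sequence manoeuvre of Proposition~\ref{theta_vanish}.

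One technical wobble: you write that in codimension $c\geq 2$ the module $M_1$ ``only has bounded complexity rather than being maximal Cohen-Macaulay''. Over a Gorenstein ring, if $M\in\MCM(R)$ then $M^*\in\MCM(R)$ and any (co)syzygy of it is again MCM; what is lost is not the depth of $M_1$ but the \emph{periodicity} of $\{\Tor_i^R(M_1,N)\}$, which in the hypersurface case is what lets $\theta=0$ plus one vanishing $\Tor$ propagate to all higher $\Tor$. That distinction is exactly where the genuine gap sits, and your suggested attacks (support varieties, deformation along one $f_i$ and induction on $c$) are reasonable but, as the paper's closing paragraph indicates, remain open even in the finite-length case.
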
 

In view of the proof of the key Proposition \ref{mainProp} and previously known results for regular and hypersurface rings, we feel it is reasonable to make:

\begin{conj}\label{ci_rigid}
Let $R$ be local complete intersection (of arbitrary dimension). Let $M,N$ be $R$-modules such that $M$ is  locally free of constant rank  on $U_R$ and $[N]=0$ in $\overline G(R)_{\mathbb Q}$. Then $(M,N)$ is $\Tor$-rigid, in the sense that for any $i>0$, $\Tor_i^R(M,N)=0$ forces $\Tor_j^R(M,N)=0$ for $j\geq i$. 
\end{conj}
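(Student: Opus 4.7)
The plan is to generalise the hypersurface argument in Proposition \ref{mainProp} using Eisenbud's CI operators together with the Avramov-Buchweitz theory of support varieties. After completing $R$, write $R = T/(f_1, \ldots, f_c)$ with $T$ regular and $f_1,\ldots,f_c$ a regular sequence. Gulliksen endows $\Ext_R^*(M,N)$ with the structure of a finitely generated module over the polynomial ring $\mathcal S := R[\chi_1, \ldots, \chi_c]$ with $\deg \chi_i = 2$, and the associated support variety $V_R(M,N) \subseteq \mathbb{A}^c_k$ controls the asymptotic behaviour of both $\Ext_R^i(M,N)$ and $\Tor_i^R(M,N)$.

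The first step is to imitate the reduction in \ref{mainProp}: after replacing $M$ by a high syzygy one may assume $M$ is MCM and reflexive, and the Auslander-type exact sequence used in \ref{mainProp} converts questions about $\Tor_i^R(M,N)$ into questions about $\Tor_i^R(M_1,N)$, where $M_1$ is a syzygy of $M^*$. Both $M$ and $M_1$ remain locally free on $U_R$ and MCM, so the desired rigidity statement is preserved under this replacement.

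Next, exploit the hypothesis $[N]=0$ in $\overline G(R)_{\mathbb Q}$ by invoking a higher $\eta$-pairing $\eta^R(M,-)$ for complete intersections generalising Hochster's $\theta$ (Proposition \ref{decent}). When $M$ is locally free on $U_R$, bi-additivity forces $\eta^R(M,-)$ to factor through $\overline G(R)_{\mathbb Q}$, so the hypothesis gives $\eta^R(M,N) = 0$. The module $\bigoplus_i \Tor_i^R(M,N)$ is a finitely generated graded $\mathcal S$-module, and $\eta^R(M,N)$ should appear (up to sign) as the top coefficient of its Hilbert polynomial; its vanishing would mean $\dim V_R(M,N) < c$, i.e.\ the Tor complex decays faster than expected.

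The remaining step, which is the chief obstacle, is a support-variety rigidity theorem: once $\dim V_R(M,N) < c$, any single vanishing $\Tor_i^R(M,N)=0$ must force $\Tor_j^R(M,N)=0$ for all $j \geq i$. For $c=1$ this is the Huneke-Wiegand/Jorgensen rigidity used in Proposition \ref{mainProp}; for $c=2$ one can plausibly push through by reducing modulo a generic element of $(\chi_1,\chi_2)$ and invoking the hypersurface case via Avramov-Buchweitz change-of-rings; but the general case requires new input, most likely a lift of $\theta$-rigidity to arbitrary codimension conditional on sharp positivity of the higher $\eta$-invariants. Establishing this rigidity propagation from vanishing of a single $\eta$ is, in my view, the essential open content of Conjecture \ref{ci_rigid}.
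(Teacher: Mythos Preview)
The statement you are attempting to prove is labeled in the paper as a \emph{Conjecture}, not a theorem; it appears in Section~\ref{open} (``Open questions'') and the paper offers no proof of it. There is therefore no ``paper's own proof'' to compare your attempt against. The paper only remarks that the conjecture is known when $\hat R$ is a hypersurface in an equicharacteristic or unramified regular local ring (citing \cite{Da1,Da2}), and that even the case where both modules have finite length is open.

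Your proposal is honest about its own incompleteness: you identify the ``chief obstacle'' as a support-variety rigidity theorem asserting that once $\dim V_R(M,N)<c$, a single $\Tor$ vanishing forces all higher vanishing, and you explicitly say this ``requires new input'' and is ``the essential open content'' of the conjecture. That is an accurate self-assessment. A few specific points where the sketch is soft even before that obstacle: (i) the claim that $\eta^R(M,N)$ appears as the top coefficient of the Hilbert polynomial of $\bigoplus_i \Tor_i^R(M,N)$, and hence that its vanishing drops $\dim V_R(M,N)$, is not established and is itself nontrivial; (ii) the reduction via the Auslander-type sequence in Proposition~\ref{mainProp} relies on the periodicity of resolutions peculiar to hypersurfaces, so the passage from $M$ to $M_1$ does not obviously preserve the rigidity question in higher codimension; (iii) bi-additivity of a putative $\eta^R(M,-)$ and its factoring through $\overline G(R)_{\mathbb Q}$ would need to be checked carefully in codimension $c>1$. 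In short, what you have written is a reasonable research outline for attacking an open problem, not a proof, and the paper makes no claim to have one either.
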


By going through the proofs of \ref{mainProp} and \ref{mainCor} one can see easily that an affirmative answer to Conjecture \ref{ci_rigid} (in dimension $3$) would imply Conjecture \ref{genGabber}.

$\Tor$-rigidity has been a subject of active investigation in commutative algebra. For more in-depth discussion and references, we refer to the introduction of \cite{Da1} and the bibliography there. It is well-known that if $R$ is regular then $\Tor$-rigidity holds for any pair of modules by work of Auslander and Lichtenbaum \cite{Au, Lich}. Furthermore, Conjecture \ref{ci_rigid} is known when $\hat R$ is a hypersurface in an equicharacteristic or unramified regular local ring, see \cite{Da1, Da2}. A simple unknown case is when  $M,N$ are  $0$-dimensional, in such situation the conditions on $M$ and $N$ are automatic, so the above Conjecture would just say that any pair of finitely generated $R$-modules of finite length over a local complete intersection  is $\Tor$-rigid. The case when one of the modules has finite length is discussed in the last section of \cite{Da2}, and is still unknown to the best of our knowledge.

\end{document}